\newtheorem{lemma}{Lemma}
\newtheorem{theorem}[lemma]{Theorem}
\newtheorem{corollary}[lemma]{Corollary}
\newtheorem{remark}[lemma]{Remark}
\newtheorem{conj}{Conjecture}
\def\cs#1{{\rm cs}(#1)}
\def\De{\Delta}
\def\Ga{\Gamma}
\def\diam{{\rm diam}}
\def\gcd{{\rm gcd}}
\begin{document}
\openup 2\jot

\title[Divisibility graph for symmetric and alternating groups]
{Divisibility graph for symmetric and alternating groups}
\author[A.~Abdolghafourian]{Adeleh~Abdolghafourian}
\address{A.~Abdolghafourian, Department of Mathematics \newline Yazd University
\\ Yazd, 89195-741, Iran}
\email{A.Abdolghafourian@stu.yazd.ac.ir}
\author[M.~A.~Iranmanesh]{Mohammad~A.~Iranmanesh}
\thanks{Corresponding author: M.~A.~Iranmanesh}\address{M.~A.~Iranmanesh, Department of Mathematics \newline Yazd University\\ Yazd, 89195-741 , Iran}
\email{iranmanesh@yazd.ac.ir}

\begin{abstract}
Let $X$ be a non-empty set of positive integers and $X^*=X\setminus \{1\}$. The divisibility
graph $D(X)$ has $X^*$ as the vertex set and there is an edge connecting $a$ and $b$ with $a, b\in X^*$
whenever $a$ divides $b$ or $b$ divides $a$. Let $X=\cs{G}$ be the set of conjugacy class sizes
of a group $G$. In this case, we denote $D(\cs{G})$ by $D(G)$. In this paper we will find the number
of connected components of $D(G)$ where $G$ is the symmetric group $S_n$ or is the alternating
group $A_n$.
\end{abstract}
\openup 1.2\jot
\subjclass[2010]{05C25, 20E45}
\keywords{Divisibility graph, Symmetric group, Alternating group, Diameter, Connected component}
\maketitle
\section{Introduction}
\label{sec:introd}
There are several graphs associated to various algebraic structures, especially finite groups, and many interesting
results have been obtained recently, as for example, in~\cite{A, CG, CD, VS}.

Let $X$ be a set of positive integers and $X^*=X\setminus \{1\}$. Mark~L.~Lewis in~\cite{L} introduced
two graphs associate with $X$, the common divisor graph and the prime vertex graph. The ~\emph{common divisor graph} $\Ga(X)$ is a graph with vertex set $V(\Ga(X)) = X^{*}$, and edge set $E(\Ga(X)) = \{\{x,y\} : \gcd(x,y)\neq1\}$. Note that $\gcd(x, y)$
denotes the greatest common divisor of $x, y$. The ~\emph{prime vertex graph}
$\De(X)$ is a graph with vertex set $V(\De(X)) = \rho(X) = \bigcup_{x\in X}\pi(x)$, where
$\pi(x)$ is the set of primes dividing $x$ and edge set
$E(\De(G)) = \{\{p,q\} : pq~\text{divides}~x,\ x\in X\}$.

Praeger and the second author defined a bipartite graph $B(X)$ in~\cite{IP} and elucidated the connection
between these graphs. The ~\emph{bipartite divisor graph} $B(X)$ is a graph with the vertex set
$V(B(X)) = \rho(X) \bigcup X^{*}$, and the edge set
$E(B(X)) = \{\{p,x\} : p\in\rho(X), x\in X^{*}~\text{and}~ p~\text{divides}\ x\}$.

Recently A.~R.~Camina and R.~D.~Camina in~\cite{CC} introduced a new directed graph (or simply a digraph)
using the notion of divisibility of positive numbers. The~\emph{divisibility digraph} $\overrightarrow{D}(X)$
has $X^*$ as the vertex set and there is an arc connecting $(a, b)$ with $a, b\in X^*$
whenever $a$ divides $b$. It is clear that the digraph $\overrightarrow{D}(X)$ is not strongly connected (where
by strongly connected digraph we mean a digraph such that there exists a directed path between any of two vertices).
So it is important to find the number of connected components of its underlying graph. We denote the
underlying graph of $\overrightarrow{D}(X)$ by $D(X)$. By the~\textit{diameter} of a graph $\Omega$, $\diam(\Omega)$, we mean
the maximum diameter of its connected components.

For a finite group $G$ and $g\in G$, let $g^G= \{x^{-1}gx : x\in G\}$ be the conjugacy class
of $g$ in $G$ and $\cs{G}=\{|g^G| : g\in G\}$ be the set of conjugacy class sizes of $G$. If $\delta$ is a
permutation, then the cycle decomposition of $\delta$ is its expression as a product of disjoint cycles.
It is interesting to investigate the properties of the prime vertex graph, the common divisor graph, the bipartite divisor
graph and the divisibility graph when $X=\cs{G}$. In this case we denote $\Ga(\cs{G})$, $\De(\cs{G})$,
$B(\cs{G})$ and $D(\cs{G})$ by $\Ga(G), \De(G), B(G)$ and $D(G)$ respectively. For properties of $\Ga(G), \De(G)$ and $B(G)$
we refer to~\cite{BHM, BDIP, CD, K}.

In this paper we investigate the graph $D(G)$ where $G$ is the symmetric group $S_n$ or the alternating
group $A_n$. Note that two vertices $a, b$ of this graph are adjacent if either $a$ divides $b$ or $b$
divides $a$. Let $\delta\in S_n$. Suppose that there are $k_i$ cycles of length $m_i$ ($1\leq i\leq r$)
in the cycle decomposition of $\delta$, such that $m_i\neq 1$ and $m_i\neq m_j$, for $1\leq i,j\leq r$, then we denote it by $\delta=[1^t,m_1^{k_{1}},...,m_r^{k_{r}}]$ where $t={n-\sum_{i=1}^{r}k_im_i}$. Also we denote the vertex corresponding to $|g^G|$ in $D(G)$ by $v_g$. Throughout the paper, $p$ is a prime number.

In Section~\ref{sec:preli} we recall some basic lemmas and theorems which we need in the next sections.
In Section~\ref{sec:div1} we will find the number of connected components of $D(S_n)$. The main theorem
of this section is Theorem~\ref{thm:9}. In Section~\ref{sec:div2} we will find the number of connected
components of $D(A_n)$. The main theorem of this section is Theorem~\ref{thm:13}.

\section{preliminaries}
\label{sec:preli}
In this section we recall some basic technical facts that we will use later.
See ~\cite[Chapter~13]{C}, ~\cite[Chapter 4]{CL} or ~\cite[p.131]{DF} for proofs and details.
\begin{lemma}\label{lem:1}
Suppose that $\delta=[1^t,m_1^{k_{1}},...,m_r^{k_{r}}]$ where $t={n-\sum_{i=1}^{r}k_im_i}$,
then $|C_{S_n}(\delta)|=(\prod_{i=1}^rk_i!{m_i}^{k_i})t!$.
\end{lemma}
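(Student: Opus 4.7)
The plan is to enumerate the centralizer directly, using the standard observation that for any $\sigma\in S_n$ and any cycle $(a_1\,a_2\,\dots\,a_m)$ of $\delta$, conjugation gives
\[
\sigma\,(a_1\,a_2\,\dots\,a_m)\,\sigma^{-1} \;=\; (\sigma(a_1)\,\sigma(a_2)\,\dots\,\sigma(a_m)).
\]
Hence $\sigma$ centralizes $\delta$ if and only if $\sigma$ permutes the cycles of $\delta$ among themselves while preserving cycle lengths. This reformulation turns the problem into a counting exercise on cycles rather than on points.

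First I would split the cycle decomposition of $\delta$ into blocks by length: for each $i\in\{1,\dots,r\}$ there are $k_i$ cycles of length $m_i$, and separately a set $F$ of $t$ fixed points. A centralizing $\sigma$ induces a permutation within each block. For each $i$, I would count in two stages: first choose a permutation of the $k_i$ cycles of length $m_i$ ($k_i!$ options), and then, for each source cycle with its target cycle declared, count the bijections between the underlying point sets that realize the conjugation. The second count is $m_i$ per cycle, because once the image of a single point of a cycle $C$ is chosen inside the target cycle $C'$, the rest of $\sigma|_C$ is forced by the equation $\sigma\delta = \delta\sigma$, and each of the $m_i$ choices of initial image is consistent. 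This gives $k_i!\,m_i^{k_i}$ per length class. The $t$ fixed points form trivial $\delta$-orbits and may be permuted freely, contributing $t!$.

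Multiplying these independent choices yields $|C_{S_n}(\delta)|=\bigl(\prod_{i=1}^r k_i!\,m_i^{k_i}\bigr)\,t!$, as claimed. The only step that genuinely needs verification is the factor $m_i$ for each cycle, i.e.\ that for a given pairing $C\mapsto C'$ of $m_i$-cycles, there are exactly $m_i$ bijections $C\to C'$ compatible with the $\delta$-action (namely the cyclic rotations of any one such matching); the rest is a bookkeeping argument on disjoint blocks. An equivalent derivation goes via orbit--stabilizer applied to $S_n$ acting by conjugation on the class of $\delta$, together with the standard enumeration $|\delta^{S_n}| = n!\big/\bigl(\prod_{i=1}^r k_i!\,m_i^{k_i}\cdot t!\bigr)$ of permutations of a given cycle type; either route reaches the stated formula, and I would present whichever is more concise given the conventions of Section~\ref{sec:preli}.
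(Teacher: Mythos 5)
Your argument is correct: the identification of $C_{S_n}(\delta)$ with the group of permutations that permute the cycles of $\delta$ within each length class (contributing $k_i!\,m_i^{k_i}$ per class, via $k_i!$ pairings and $m_i$ rotations per matched cycle) together with the free $t!$ action on fixed points is exactly the standard proof. The paper itself does not prove Lemma~\ref{lem:1} but cites it from the references in Section~\ref{sec:preli}, and your argument coincides with the standard one given there, so no further comparison is needed.
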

\begin{lemma}\label{lem:3}
Let $\delta\in A_n$, then there is an odd permutation in $C_{S_n}(\delta)$ if and only if $|C_{S_n}(\delta)|=2|C_{A_n}(\delta)|$.
\end{lemma}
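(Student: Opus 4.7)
The plan is to exploit the fact that $A_n$ is a normal subgroup of $S_n$ of index~$2$, and apply this to the centralizer $C_{S_n}(\delta)$. First I would observe the basic equality $C_{A_n}(\delta)=C_{S_n}(\delta)\cap A_n$, which is immediate from the definitions since $\delta\in A_n$. This exhibits $C_{A_n}(\delta)$ as a subgroup of $C_{S_n}(\delta)$, and the task becomes computing its index.

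Next I would invoke the second isomorphism theorem applied to $C_{S_n}(\delta)$ and $A_n$ inside $S_n$: one has
\[
C_{S_n}(\delta)\big/\bigl(C_{S_n}(\delta)\cap A_n\bigr)\;\cong\;C_{S_n}(\delta)\,A_n\big/A_n\;\leq\;S_n/A_n,
\]
so the quotient has order either $1$ or $2$. Hence $[C_{S_n}(\delta):C_{A_n}(\delta)]\in\{1,2\}$, and the index equals $2$ precisely when $C_{S_n}(\delta)A_n=S_n$, equivalently when $C_{S_n}(\delta)\not\subseteq A_n$.

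Finally I would translate this last condition into the language of the statement: $C_{S_n}(\delta)\not\subseteq A_n$ means exactly that some element of $C_{S_n}(\delta)$ is an odd permutation. Thus the existence of an odd permutation in $C_{S_n}(\delta)$ is equivalent to $[C_{S_n}(\delta):C_{A_n}(\delta)]=2$, i.e.\ to $|C_{S_n}(\delta)|=2|C_{A_n}(\delta)|$, which is the claim.

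There is no real obstacle here; the lemma is essentially the standard observation about how a centralizer behaves when one passes from a group to an index-$2$ subgroup. The only thing to be careful about is the direction of the biconditional, since the conclusion is stated in terms of orders rather than in terms of the index being $2$; but because the other possible index value is $1$ (forcing $|C_{S_n}(\delta)|=|C_{A_n}(\delta)|$ and hence no odd centralizing element), the two phrasings coincide.
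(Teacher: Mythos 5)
Your proof is correct, and it is the standard argument: the paper itself states this lemma without proof, citing textbook references, and the intended justification is exactly your observation that $C_{A_n}(\delta)=C_{S_n}(\delta)\cap A_n$ has index $1$ or $2$ in $C_{S_n}(\delta)$ by the second isomorphism theorem, with index $2$ precisely when the centralizer contains an odd permutation. Nothing further is needed.
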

\begin{corollary}\label{cor:4}
Let $\delta\in A_n$. Then either $|\delta^{S_n}|=|\delta^{A_n}|$ or $|\delta^{S_n}|=2|\delta^{A_n}|$.
\end{corollary}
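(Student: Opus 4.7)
The plan is to derive both cases directly from Lemma~\ref{lem:3} via the orbit--stabilizer theorem. Writing $|\delta^{S_n}|=n!/|C_{S_n}(\delta)|$ and $|\delta^{A_n}|=(n!/2)/|C_{A_n}(\delta)|$, I would take the ratio
\[
\frac{|\delta^{S_n}|}{|\delta^{A_n}|}=\frac{2|C_{A_n}(\delta)|}{|C_{S_n}(\delta)|}
\]
and analyze this ratio according to whether $C_{S_n}(\delta)$ contains an odd permutation.

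If no odd permutation lies in $C_{S_n}(\delta)$, then $C_{S_n}(\delta)\leq A_n$, so $C_{S_n}(\delta)=C_{A_n}(\delta)$; the ratio is then $2$, giving $|\delta^{S_n}|=2|\delta^{A_n}|$. If on the other hand there is an odd permutation in $C_{S_n}(\delta)$, then by Lemma~\ref{lem:3} we have $|C_{S_n}(\delta)|=2|C_{A_n}(\delta)|$; the ratio becomes $1$, so $|\delta^{S_n}|=|\delta^{A_n}|$.

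There is no serious obstacle here. The only subtle point is justifying the implication ``no odd permutation centralizes $\delta$ $\Rightarrow$ $|C_{S_n}(\delta)|=|C_{A_n}(\delta)|$,'' which is immediate since $C_{A_n}(\delta)=C_{S_n}(\delta)\cap A_n$, and if $C_{S_n}(\delta)$ consists entirely of even permutations then this intersection equals all of $C_{S_n}(\delta)$. The two cases of the corollary then correspond precisely to the two possibilities for the index $[C_{S_n}(\delta):C_{A_n}(\delta)]\in\{1,2\}$.
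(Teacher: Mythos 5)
Your proof is correct and is essentially the argument the paper intends: the corollary is presented as an immediate consequence of Lemma~\ref{lem:3} via the orbit--stabilizer theorem, which is exactly the ratio computation and case split on $[C_{S_n}(\delta):C_{A_n}(\delta)]\in\{1,2\}$ that you carry out. No issues.
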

\begin{lemma}\label{lem:5}
Let $\delta\in A_n$ fixes at least two points. Then $|C_{S_n}(\delta)|=2|C_{A_n}(\delta)|$ and $|\delta^{A_n}|=|\delta^{S_n}|$.
\end{lemma}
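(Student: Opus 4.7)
The plan is to exhibit an explicit odd permutation that commutes with $\delta$ and then invoke Lemma~\ref{lem:3}. Since $\delta$ fixes at least two points, I would choose two distinct fixed points $a,b$ of $\delta$ and consider the transposition $\tau=(a\,b)$. Because $a$ and $b$ are fixed by $\delta$, a direct check of the action on each point of $\{1,\dots,n\}$ shows $\tau\delta=\delta\tau$, so $\tau\in C_{S_n}(\delta)$. Since $\tau$ is an odd permutation, the centraliser $C_{S_n}(\delta)$ contains an odd permutation, and Lemma~\ref{lem:3} applied to this $\delta\in A_n$ immediately gives $|C_{S_n}(\delta)|=2|C_{A_n}(\delta)|$.

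For the second equality I would just apply the orbit--stabiliser theorem in both $S_n$ and $A_n$: we have $|\delta^{S_n}|=|S_n|/|C_{S_n}(\delta)|$ and $|\delta^{A_n}|=|A_n|/|C_{A_n}(\delta)|$. Combining $|S_n|=2|A_n|$ with the first part of the lemma, the two factors of $2$ in the ratio cancel and yield $|\delta^{S_n}|=|\delta^{A_n}|$. There is essentially no obstacle here; the only idea is producing the commuting transposition, and the hypothesis of two fixed points is precisely what makes that construction available.
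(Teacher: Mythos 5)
Your proof is correct, and since the paper itself states this lemma without proof (citing standard references), your argument is exactly the standard one those sources use: the transposition on two fixed points gives an odd element of $C_{S_n}(\delta)$, Lemma~\ref{lem:3} yields $|C_{S_n}(\delta)|=2|C_{A_n}(\delta)|$, and orbit--stabiliser with $|S_n|=2|A_n|$ gives $|\delta^{A_n}|=|\delta^{S_n}|$. Nothing is missing.
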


\begin{lemma}\label{lem:6}
Suppose that $\delta=[1^t,m_1^{k_{1}},...,m_r^{k_{r}}]$ and $t\leq1$. If there exists $i$
such that $m_i$ is even or there exists $i$ such that $m_i$ is odd
and $k_i\geq2$, then $|C_{S_n}(\delta)|=2|C_{A_n}(\delta)|$ and
$|\delta^{A_n}|=|\delta^{S_n}|$.
\end{lemma}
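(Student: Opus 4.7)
The plan is to reduce everything to exhibiting a single odd element of $C_{S_n}(\delta)$ and then to construct such an element concretely from the cycle structure of $\delta$. By Lemma~\ref{lem:3} (applied to $\delta\in A_n$, which is implicit in the conclusion since it references $|\delta^{A_n}|$), producing an odd permutation in $C_{S_n}(\delta)$ immediately yields $|C_{S_n}(\delta)|=2|C_{A_n}(\delta)|$; dividing $|S_n|=2|A_n|$ by the respective centralizer orders then gives $|\delta^{A_n}|=|\delta^{S_n}|$ at no extra cost. So the real task is simply to produce, in each of the two hypothesized cases, one odd permutation that commutes with $\delta$.

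For the first case, suppose some $m_i$ is even. I would take $c$ to be any one of the $k_i$ cycles of length $m_i$ already appearing in the cycle decomposition of $\delta$. Since the cycles of $\delta$ are pairwise disjoint and hence mutually commuting, $c\in C_{S_n}(\delta)$; and $c$ is an $m_i$-cycle with $m_i$ even, so $\mathrm{sgn}(c)=(-1)^{m_i-1}=-1$. For the second case, suppose some $m_i$ is odd with $k_i\geq 2$, and choose two distinct cycles of that length in $\delta$, say $c_1=(a_1\,a_2\,\cdots\,a_{m_i})$ and $c_2=(b_1\,b_2\,\cdots\,b_{m_i})$. Set
\[
\sigma=(a_1\,b_1)(a_2\,b_2)\cdots(a_{m_i}\,b_{m_i}).
\]
The standard conjugation rule $\sigma(x_1\,\cdots\,x_{m_i})\sigma^{-1}=(\sigma x_1\,\cdots\,\sigma x_{m_i})$ shows that $\sigma$ swaps $c_1$ with $c_2$ while fixing the supports of the remaining cycles of $\delta$ pointwise, so $\sigma\in C_{S_n}(\delta)$; and $\sigma$ is a product of $m_i$ disjoint transpositions with $m_i$ odd, giving $\mathrm{sgn}(\sigma)=-1$.

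There is no serious obstacle here; the only routine verification is that the involution $\sigma$ in the second case actually centralizes $\delta$, which is just the conjugation identity above. It is worth noting that the hypothesis $t\leq 1$ is not invoked anywhere in the argument itself; its role is purely to delineate Lemma~\ref{lem:6} from Lemma~\ref{lem:5}, which already covers the complementary range $t\geq 2$ by a different construction (a transposition of two fixed points).
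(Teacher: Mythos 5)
Your proof is correct. The paper does not actually prove Lemma~\ref{lem:6}; it is stated in the preliminaries with a citation to standard references, and the argument in those sources is essentially the one you give: exhibit an odd permutation in $C_{S_n}(\delta)$ (an even-length cycle of $\delta$ itself, or the involution interchanging two disjoint cycles of equal odd length), invoke Lemma~\ref{lem:3}, and deduce $|\delta^{A_n}|=|\delta^{S_n}|$ from $|S_n|=2|A_n|$ together with $|C_{S_n}(\delta)|=2|C_{A_n}(\delta)|$. Your remark that the hypothesis $t\leq 1$ is not needed for the conclusion and merely delimits this lemma from Lemma~\ref{lem:5} is also accurate.
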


By Lemma~\ref{lem:5} and Lemma~\ref{lem:6} we have the following corollary.
\begin{corollary}\label{cor:7}
If $\delta=[1^t,m_1^1,...,m_r^1]\in A_n$, each $m_i$ is
odd and $t\leq1$, then we have $|C_{A_n}(\delta)|=|C_{S_n}(\delta)|$ and
$|\delta^{A_n}|=\frac{1}{2}|\delta^{S_n}|$. For other cases, we have $|C_{A_n}(\delta)|=\frac{1}{2}|C_{S_n}(\delta)|$
and $|\delta^{A_n}|=|\delta^{S_n}|$.
\end{corollary}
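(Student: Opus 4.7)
First I would translate everything to a statement about centralizer sizes via the orbit-stabilizer theorem: since $|S_n|=2|A_n|$, the equality $|C_{A_n}(\delta)|=|C_{S_n}(\delta)|$ is equivalent to $|\delta^{A_n}|=\tfrac{1}{2}|\delta^{S_n}|$, while $|C_{A_n}(\delta)|=\tfrac{1}{2}|C_{S_n}(\delta)|$ is equivalent to $|\delta^{A_n}|=|\delta^{S_n}|$. Corollary~\ref{cor:4} guarantees that exactly one of these two alternatives holds for every $\delta\in A_n$, so the task reduces to deciding which alternative occurs in each cycle-type case.

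For the ``other cases'' mentioned in the statement, my plan is to dispatch everything via the two structural lemmas already in hand. If $t\geq 2$, then $\delta$ fixes at least two points, and Lemma~\ref{lem:5} yields $|C_{S_n}(\delta)|=2|C_{A_n}(\delta)|$. If instead $t\leq 1$ but the cycle type still falls outside the ``all $m_i$ odd, all $k_i=1$'' pattern, then either some $m_i$ is even or some $m_i$ is odd with $k_i\geq 2$; in either situation Lemma~\ref{lem:6} again gives $|C_{S_n}(\delta)|=2|C_{A_n}(\delta)|$. Together these sub-cases exhaust everything outside the exceptional configuration, so $|\delta^{A_n}|=|\delta^{S_n}|$ follows from the orbit-stabilizer translation above.

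The only real work is the exceptional case $\delta=[1^t,m_1^1,\ldots,m_r^1]$ with each $m_i$ odd and $t\leq 1$, where Lemmas~\ref{lem:5} and~\ref{lem:6} are silent. Here I would argue directly that $C_{S_n}(\delta)\subseteq A_n$. Since the cycle lengths $m_1,\ldots,m_r$ are pairwise distinct, no element of the centralizer can permute the cycles of $\delta$ nontrivially, so $C_{S_n}(\delta)=\langle c_1\rangle\times\cdots\times\langle c_r\rangle\times\mathrm{Sym}(F)$, where $c_i$ is the $m_i$-cycle of $\delta$ and $F$ is the set of fixed points; this is also consistent with the count $|C_{S_n}(\delta)|=\prod_i m_i\cdot t!$ given by Lemma~\ref{lem:1}. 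Because each $c_i$ is an odd-length cycle (hence an even permutation) and $\mathrm{Sym}(F)$ is trivial when $t\leq 1$, every element of $C_{S_n}(\delta)$ lies in $A_n$. Thus $C_{A_n}(\delta)=C_{S_n}(\delta)$, and the orbit-stabilizer computation forces $|\delta^{A_n}|=\tfrac{1}{2}|\delta^{S_n}|$. The main obstacle — modest as it is — is justifying the centralizer decomposition cleanly, in particular noting why the cycle-permuting factor disappears precisely when all cycle lengths are distinct.
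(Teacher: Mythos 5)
Your proposal is correct. For the ``other cases'' you argue exactly as the paper does: the corollary is stated there as an immediate consequence of Lemma~\ref{lem:5} (when $t\geq 2$) and Lemma~\ref{lem:6} (when $t\leq 1$ but some $m_i$ is even or some odd $m_i$ has $k_i\geq 2$), combined with the orbit--stabilizer translation between centralizer and class sizes. Where you genuinely diverge is the exceptional case $\delta=[1^t,m_1^1,\ldots,m_r^1]$ with all $m_i$ odd and $t\leq 1$: the paper gives no argument at all, implicitly leaning on the standard splitting criterion from the cited references (equivalently, the ``only if'' direction of Lemma~\ref{lem:3}), whereas you prove it directly by showing $C_{S_n}(\delta)\subseteq A_n$. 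Your worry about justifying the centralizer decomposition is easily settled and you essentially say how: the subgroup $\langle c_1\rangle\times\cdots\times\langle c_r\rangle\times\mathrm{Sym}(F)$ (a direct product because the supports are disjoint) is visibly contained in $C_{S_n}(\delta)$ and already has order $\prod_i m_i\cdot t!$, which by Lemma~\ref{lem:1} is the full order of the centralizer, so equality holds without any separate analysis of cycle-permuting elements; since each $c_i$ has odd length and $t\leq 1$, every element is even. Alternatively, once you know $C_{S_n}(\delta)$ contains no odd permutation, Lemma~\ref{lem:3} delivers $|C_{S_n}(\delta)|=|C_{A_n}(\delta)|$ directly. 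What your route buys is a self-contained proof of the half of the corollary the paper leaves to the literature; what the paper's route buys is brevity, since Lemmas~\ref{lem:3}--\ref{lem:6} are quoted as standard facts.
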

\begin{lemma}\label{lem:2}
Let $x=\sum_{i=1}^rk_im_i$, then $\prod_{i=1}^rk_i!{m_i}^{k_i}$ divides $x!$. In addition if  $m_i\geq 3$, for some $1\leq i\leq r$,
then $2\prod_{i=1}^rk_i!{m_i}^{k_i}$ divides $x!.$
\end{lemma}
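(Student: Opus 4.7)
The plan is to interpret both divisibilities combinatorially as counting statements for permutations in $S_x$, since $x = \sum_{i=1}^{r} k_i m_i$ is exactly the size of a permutation domain that admits cycle type $[m_1^{k_1}, \ldots, m_r^{k_r}]$ with no fixed points. Applying Lemma~\ref{lem:1} with $t = 0$ to such a $\delta \in S_x$ gives $|C_{S_x}(\delta)| = \prod_{i=1}^{r} k_i! m_i^{k_i}$, which divides $|S_x| = x!$ by Lagrange's theorem. Equivalently, the quantity $x!/\prod_{i=1}^{r} k_i! m_i^{k_i}$ equals $|\delta^{S_x}|$, the number of permutations in $S_x$ with this cycle type, hence is an integer. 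This settles the first assertion.

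For the reinforced divisibility under the hypothesis that $m_{i_0} \geq 3$ for some $i_0$, the goal becomes to show that $|\delta^{S_x}|$ is even. I would prove this by exhibiting a fixed-point-free involution $\phi$ on the set $X$ of permutations in $S_x$ of cycle type $[m_1^{k_1}, \ldots, m_r^{k_r}]$. Given $\sigma \in X$, let $C$ denote the cycle of $\sigma$ containing the smallest integer that lies in any cycle of $\sigma$ of length at least $3$; such a cycle exists by hypothesis. Define $\phi(\sigma)$ to be the permutation obtained from $\sigma$ by replacing $C$ by its reverse $(a_1, a_m, a_{m-1}, \ldots, a_2)$ and leaving all other cycles untouched. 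Plainly $\phi(\sigma)$ has the same cycle type as $\sigma$, hence $\phi(\sigma) \in X$, and since reversal preserves the underlying set of elements of every cycle, the distinguished cycle of $\phi(\sigma)$ is again $C$ (now reversed), so $\phi \circ \phi = \mathrm{id}$.

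The step I expect to require actual verification, and which is the main obstacle in this plan, is checking that $\phi$ has no fixed points---equivalently, that reversing a cycle of length $m \geq 3$ always produces a different permutation. This holds because the reverse of $(a_1, a_2, \ldots, a_m)$ sends $a_1 \mapsto a_m$ while the original sends $a_1 \mapsto a_2$, and $a_2 \neq a_m$ when $m \geq 3$ (the two 2-cycles collapse to the same permutation, which is exactly why the hypothesis $m_i \geq 3$ is needed). Consequently $X$ is partitioned into orbits of size $2$ under $\phi$, so $|X|$ is even and $2\prod_{i=1}^{r} k_i! m_i^{k_i}$ divides $x!$. A tempting alternative would be to deduce the stronger divisibility from the $A_x/S_x$ centralizer relations in Corollary~\ref{cor:7}, but this route stumbles because $\delta$ need not lie in $A_x$ at all, and the class-splitting criterion in fact pulls in the opposite direction for the cases we care about; the direct combinatorial involution avoids these issues entirely.
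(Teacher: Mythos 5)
Your proposal is correct, but it proves the lemma by a genuinely different route than the paper. The paper argues by induction on $r$: for $r=1$ it factors $(k_1m_1)!$ explicitly, pulling out the multiples $m_1, 2m_1,\dots,k_1m_1$ to get the factor $k_1!m_1^{k_1}$ and observing that when $m_1\geq 3$ an even factor survives among the remaining terms; the inductive step uses the integrality of $\binom{x}{k_tm_t}$ to split $x!$ as a multiple of $x'!\,(k_tm_t)!$. Your argument instead identifies $x!/\prod_i k_i!m_i^{k_i}$, via Lemma~\ref{lem:1} with $t=0$ and the orbit--stabilizer theorem, as the number of permutations in $S_x$ of cycle type $[m_1^{k_1},\dots,m_r^{k_r}]$, which gives the first divisibility at once, and then proves the second by showing this class size is even through the fixed-point-free involution that reverses the cycle containing the least point lying in a cycle of length at least $3$; your verification that reversal moves every such permutation (since $a_2\neq a_m$ when $m\geq 3$) and that the involution preserves the distinguished cycle is sound, and your remark that $2$-cycles are exactly the obstruction explains the hypothesis $m_i\geq 3$ well. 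What the two approaches buy: yours is conceptually tidier and stays within the paper's theme (the quotient literally \emph{is} a conjugacy class size, and its parity is seen bijectively), whereas the paper's induction is purely arithmetic and, unlike yours, applies verbatim to arbitrary positive integers $k_i,m_i$ with no tacit assumption that the $m_i$ are pairwise distinct (and $\neq 1$), which your appeal to Lemma~\ref{lem:1} does require; in every application in the paper the data do come from genuine cycle types, and even in the repeated-$m_i$ case your argument is easily patched by inserting the extra multinomial factor $\binom{k_i+k_j}{k_i}$, so this is a limitation of scope rather than an error.
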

\begin{proof}
We prove this lemma by induction on $r$. Let $r=1$, then
$$x!=\prod_{\substack{i=0}}^{\scriptstyle{k_1m_1-1}}(k_1m_1-i )= k_1!m_1^{k_1}\prod_{\substack{i=1 \\ m_1\nmid i}}^{\scriptstyle{k_1m_1-1}}(k_1m_1-i ).$$
If $m_1\geq 3$, then $2$ divides $\prod\limits_{\substack{i=1 \\ m_1\nmid i}}^{\scriptstyle{k_1m_1-1}}(k_1m_1-i ) $. Therefore $2k_1!m_1^{k_1}$ divides $x!.$

Suppose that $r=t$. So $x=\sum_{i=1}^tk_im_i=x'+k_tm_t$ where
$x'=\sum_{i=1}^{t-1}k_i m_i$.
Since ${x\choose k_t m_t}\in \mathbb{N}$, we conclude that
$x'!(k_tm_t)!$ divides $x!$. By induction hypothesis $\prod_{i=1}^{t-1}k_i!{m_i}^{k_i}$
divides $x'!$ and $k_t!m_{t}^{k_t}$ divides $(k_tm_t)!.$  Let $m_i\geq 3$ for some $1\leq i\leq r$.
Then, without loss of generality, we may assume $m_t\geq 3$. So  $2k_t!m_t^{k_t}$ divides $(k_tm_t)!.$
Therefore $2\prod_{i=1}^{t}k_i!{m_i}^{k_i}$ divides $x!.$
\end{proof}
\begin{remark}\label{rem:0}
Let $G$ be a finite group and $x\in G$. It is clear that $C_G(x)\leq C_G(x^m)$ for every natural number $m$. So $|(x^m)^G|$ divides $|x^G|$. This means that $v_{x}$ is adjacent to $v_{x^m}$ in $D(G)$.
\end{remark}

\section{Divisibility graph for $S_n$}
\label{sec:div1}
In this section we investigate the number of connected components of $D(S_n)$. We will prove
that $D(S_n)$ has at most two connected components. If it is disconnected, then one of its
connected components is an isolated vertex, that is, a copy of $K_1$.

It is easy to see that both $D(S_1)$ and $D(S_2)$ are null graphs, that is, have no vertices.
Also for $n=3,~4$ and $5$, $D(S_n)$ has two connected components (see Figure~\ref{fig:1}).
\begin{figure}[here]
\begin{center}
\includegraphics[height=4cm]{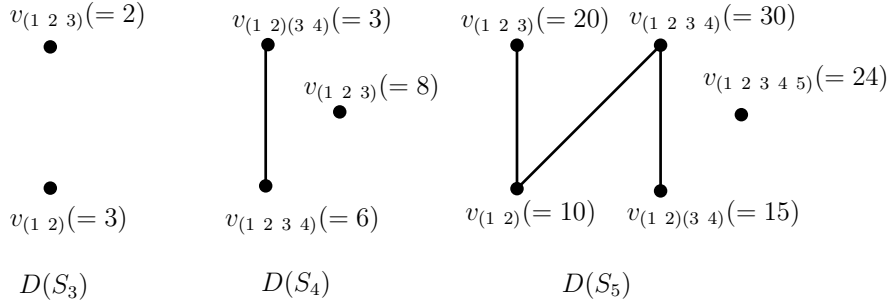}
\caption{The graph $D(S_n)$ for $n=3,~4$ and $5$.}
\label{fig:1}
\end{center}
\end{figure}

\begin{lemma}\label{lem:8}
Let $1\neq \delta\in S_n$, $n>2$ and $p\geq n-1$. Then $p$ divides $|C_{S_n}(\delta)|$ if and only if
$\delta$ is a cycle of length $p$, that is, $\delta=[1,p]$ or $\delta=[p]$.
\end{lemma}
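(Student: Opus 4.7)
The plan is to use the centralizer formula from Lemma~\ref{lem:1}, namely $|C_{S_n}(\delta)|=\left(\prod_{i=1}^r k_i!\,m_i^{k_i}\right)t!$, and analyze when a prime $p\ge n-1$ can divide any of the factors $k_i!$, $m_i^{k_i}$, or $t!$.

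For the easy direction, I would just evaluate: if $\delta=[p]$ then $t=0$ and $|C_{S_n}(\delta)|=p$, and if $\delta=[1,p]$ then $t=1$ and $|C_{S_n}(\delta)|=p\cdot 1!=p$. In both cases $p$ divides $|C_{S_n}(\delta)|$.

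For the nontrivial direction, assume $\delta=[1^t,m_1^{k_1},\ldots,m_r^{k_r}]\neq 1$ and $p\mid|C_{S_n}(\delta)|$. Since $p$ is prime, it must divide one of $t!$, some $k_i!$, or some $m_i^{k_i}$. I would rule out the first two possibilities as follows. If $p\mid t!$ then $t\geq p\geq n-1$; combined with $t\leq n$ this forces $t=n$ (excluded, as $\delta\neq 1$) or $t=n-1$ and $p=n-1$, which forces $\sum k_im_i=1$, impossible since each $m_i\geq 2$. If $p\mid k_i!$ for some $i$, then $k_i\geq p\geq n-1$, but $k_im_i\leq n$ with $m_i\geq 2$ gives $k_i\leq n/2<n-1$ for $n\geq 3$, a contradiction.

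Hence $p\mid m_i^{k_i}$ for some $i$, and since $p$ is prime, $p\mid m_i$. Since $m_i\leq n$ and $p\geq n-1$, the only way to have $p\mid m_i$ with $m_i\leq n\leq p+1$ is $m_i=p$ (the alternative $m_i=p+1$ would force $p\mid p+1$). Now $k_ip=k_im_i\leq n\leq p+1$ forces $k_i=1$, and then $t+\sum_{j\neq i}k_jm_j=n-p\in\{0,1\}$. Since each $m_j\geq 2$, the sum $\sum_{j\neq i}k_jm_j$ must be $0$, so $r=1$ and $t=n-p\in\{0,1\}$, giving either $\delta=[p]$ or $\delta=[1,p]$.

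The argument is essentially bookkeeping with the inequalities $k_i\le n/m_i$ and $t\le n$ against $p\geq n-1$; the only subtle step is eliminating the case $t=n-1$, where I need to use that $\delta$ is nontrivial so $\sum k_im_i\geq 2$. I do not expect a real obstacle beyond making the case split transparent.
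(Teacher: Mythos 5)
Your proposal is correct and follows essentially the same route as the paper: apply the centralizer formula of Lemma~\ref{lem:1}, split on whether $p$ divides $t!$, some $k_i!$, or some $m_i$, rule out the first two by the inequalities against $p\geq n-1$, and conclude $m_i=p$ with $k_i=1$ and $n-p\in\{0,1\}$. Your write-up is if anything slightly more explicit than the paper's (which frames the argument as a contradiction and is terser about why $m_j=p$ forces $\delta$ to be exactly a $p$-cycle), but there is no substantive difference.
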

\begin{proof}
First suppose $p$ divides $|C_{S_n}(\delta)|$. Assume, to the contrary,
that
$\delta=[1^t, m_1^{k_1},$
 $\cdots, m_r^{k_r}]$ is not a $p$-cycle. By Lemma \ref{lem:1}, $|C_{S_n}(\delta)|=(\prod_{i=1}^rk_i!{m_i}^{k_i})t!$. Since $p$ divides
$|C_{S_n}(\delta)|$, then either $p$ divides $t!$ or there exists $j$ such that
$p$ divides $m_j$ or $k_j!$. First assume that $p$ divides $m_j$. In this case we conclude that $p=m_j$.
Hence $\delta$ is a cycle of length $p$ which is a contradiction. Now suppose that $p$ divides $k_j!$.
Since $p$ is prime, we have $k_j\geq p\geq n-1$. Thus $k_jm_j\geq 2n-2>n$, which is a contradiction too.
Finally if $p$ divides $t!$, then $t=n-\sum_{i=1}^{r}k_im_i\geq p\geq n-1$, a contradiction.

For the other direction, note that if $\delta$ is a cycle of length $p$ then $|C_{S_n}(\delta)|=p(n-p)!$.
\end{proof}
\begin{theorem}\label{thm:9}
Let $1\neq\delta\in S_n$ and $n>6$. If $\delta$ is a $p$-cycle where $p\geq n-1$, then $v_{\delta}$ is an
isolated vertex of $D(S_n)$. The other vertices are in a single connected component.
\end{theorem}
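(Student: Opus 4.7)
The theorem has two parts. For part (i), the isolation of $v_\delta$ when $\delta$ is a $p$-cycle with $p\geq n-1$: since $\delta$ must have a cycle of length $p$ in $S_n$, we have $p\in\{n-1,n\}$, so $\delta$ has cycle type $[p]$ or $[1,p]$. Lemma~\ref{lem:1} gives $|C_{S_n}(\delta)|=p$ and thus $|\delta^{S_n}|=n!/p$. Because $p$ is prime, $n\in\{p,p+1\}$, and $p\geq 6$, a direct check shows $p\nmid n!/p$. For any other $g\in S_n$ with $g$ not a $p$-cycle, Lemma~\ref{lem:8} gives $p\nmid|C_{S_n}(g)|$, hence $p\mid|g^{S_n}|$, which precludes $|g^{S_n}|$ from dividing $|\delta^{S_n}|$. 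Conversely, $|\delta^{S_n}|$ dividing $|g^{S_n}|$ would force $|C_{S_n}(g)|$ to divide $p$; the value $1$ is impossible (any $g\neq 1$ has $|C_{S_n}(g)|\geq 2$), and the value $p$ forces, by Lemma~\ref{lem:8}, $g$ to be a $p$-cycle of the same type as $\delta$, giving $v_g=v_\delta$.

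For part (ii), the connectivity of the remaining vertices, I would fix as hub the transposition class, setting $h=[1^{n-2},2]$, and show every non-isolated $v_\delta$ is connected to $v_h$ in at most a few steps. The argument proceeds by cases on the cycle type $\delta=[1^t,m_1^{k_1},\ldots,m_r^{k_r}]$. If some $m_i$ is composite, say $m_i=uv$ with $u,v\geq 2$, then $\delta^u$ replaces that cycle by $u$ cycles of length $v$, and Remark~\ref{rem:0} makes $v_\delta$ adjacent to $v_{\delta^u}$, so we may replace $\delta$ by $\delta^u$. If $r\geq 2$ with all $m_i$ prime, then $\delta^{m_1}$ trivializes the $m_1$-cycles while leaving the other cycles intact, reducing $r$ by one. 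Iterating these two reductions, we land in one of the terminal cases: (c) $\delta=[1^t,q^{k_1}]$ with $q$ prime and $k_1\geq 2$, or (d) $\delta=[1^{n-q},q]$ with $q$ prime and $q\leq n-2$.

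For the terminal cases, I would verify direct bridges to $v_h$ via Lemmas~\ref{lem:1} and~\ref{lem:2}. In case (c) with $t\geq 2$, the element $\sigma=[1^{t-2},2,q^{k_1}]$ satisfies $|\delta^{S_n}|\mid|\sigma^{S_n}|$ (ratio $\binom{t}{2}$) and $|h^{S_n}|\mid|\sigma^{S_n}|$ (ratio $(n-2)!/(k_1!\,q^{k_1}(t-2)!)$, an integer by Lemma~\ref{lem:2} since $(t-2)+k_1 q = n-2$), yielding a two-step path from $v_\delta$ through $v_\sigma$ to $v_h$. When $t\leq 1$, I would first replace $\delta$ by $[1^{t+q},q^{k_1-1}]$, absorbing one $q$-cycle into fixed points, and iterate until $t\geq 2$. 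In case (d), the bridge $\sigma=[1^{n-2-q},2,q]$ (which exists since $q\leq n-2$) satisfies $|\delta^{S_n}|\mid|\sigma^{S_n}|$ (ratio $(n-q)(n-q-1)/2$) and $|h^{S_n}|\mid|\sigma^{S_n}|$ (ratio $(n-2)!/(q(n-2-q)!)$, an integer since among any $q$ consecutive integers one is divisible by $q$).

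The main obstacle is the terminal case analysis, particularly case (c) with $t\leq 1$ where the iteration must preserve the divisibility chain while reducing $k_1$, and confirming that no bridging element coincides with the identity or with an isolated $p$-cycle. The hypothesis $n>6$ is consumed here: it ensures enough room for the auxiliary transposition in the bridges and rules out the three small exceptional configurations of Figure~\ref{fig:1}.
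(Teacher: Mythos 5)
Your part (i) and your overall plan for part (ii) — reduce via Remark~\ref{rem:0} to cycle types $[1^t,q^{k}]$ with $q$ prime, then bridge to the transposition class using Lemmas~\ref{lem:1} and~\ref{lem:2} — coincide with the paper's strategy, and your odd-$q$, $t\geq2$ bridge is correct. However, two of your terminal steps are genuinely false as stated. First, in case (c) the bridge $\sigma=[1^{t-2},2,q^{k_1}]$ cannot be used when $q=2$: the added transposition merges with the existing $2$-cycles, so in fact $\sigma=[1^{t-2},2^{k_1+1}]$ and $|C_{S_n}(\sigma)|=(k_1+1)!\,2^{k_1+1}(t-2)!$, not $2\,k_1!\,2^{k_1}(t-2)!$; your ratios are then wrong and the claimed adjacencies can fail. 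For example, in $S_8$ with $\delta=[1^2,2^3]$ and $\sigma=[2^4]$ one has $|C_{S_8}(\sigma)|=384$ and $|C_{S_8}(\tau)|=1440$, and neither divides the other, so $v_\sigma$ is not adjacent to $v_\tau$; in $S_9$ with $\delta=[1^5,2^2]$ and $\sigma=[1^3,2^3]$ the centralizer orders are $960$ and $288$, neither dividing the other, so even the first leg breaks. Second, your case (c) with $t\leq1$: passing from $[1^t,q^{k_1}]$ to $[1^{t+q},q^{k_1-1}]$ by deleting a $q$-cycle is not always an edge, since the relevant ratio is $k_1q/\bigl((t+1)(t+2)\cdots(t+q)\bigr)$ and neither it nor its reciprocal need be an integer: for $[3^3]\in S_9$ versus $[1^3,3^2]$ the centralizer orders are $162$ and $108$, and for $[1,2^4]\in S_9$ versus $[1^3,2^3]$ they are $384$ and $288$; in both examples neither divides the other, so your iteration stalls exactly in the case you yourself flagged as the main obstacle.

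The paper gets around both difficulties by never inserting or deleting a cycle of a length already present in these tight configurations: for $p=2$ it merges two $2$-cycles into a $4$-cycle, taking $\delta'=[1^{n-2k},4^1,2^{k-2}]$ (ratio $k(k-1)$); for odd $p$ with $kp>n-2$ and $k\geq3$ it merges two $p$-cycles into a $2p$-cycle, taking $\delta'=[1^t,(2p)^1,p^{k-2}]$ (ratio $pk(k-1)/2$); and for $k=2$, odd $p$, it drops to the single $p$-cycle $[1^{n-p},p]$, which is legitimate there because $p\leq n-2$ and the generic case with two spare fixed points then applies. These ratios are integers unconditionally, and the second legs to $v_\tau$ follow from Lemma~\ref{lem:2}. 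To repair your argument you must replace the $q=2$ bridges and the $t\leq1$ deletion step by moves of this merging type (or otherwise verify divisibility case by case); you should also record the easy observation that the power reduction can only terminate at a $q$-cycle with $q\geq n-1$ if $\delta$ was already such a cycle, which is what justifies the bound $q\leq n-2$ in your case (d).
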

\begin{proof}
First suppose that $\delta$ is a cycle of length $p=n-i$ where $i\in \{0,1\}$. Assume, to the contrary,
that $v_{\delta}$ has a neighbor, say $v_{\delta'}$, where the cycle decomposition of $\delta'$ is not
similar to $\delta$ and  $|C_{S_n}(\delta')|=x$. Then $n-i$ divides $x$, which is a contradiction by Lemma~\ref{lem:8}.
Therefore in this case $v_{\delta}$ is an isolated vertex of $D(S_n)$.

Let $v_{\tau}$ be the vertex of $D(S_n)$ corresponding to an arbitrary transposition
namely $\tau$. We prove that there exists a path between other arbitrary vertices of $D(S_n)$ and $v_{\tau}$ by using Lemma~\ref{lem:1} and Lemma~\ref{lem:2}. Since for every $\delta\in S_n$ there exists a natural number $m$ such that $\delta^m=[1^t,p^{t'}]$, by Remark~\ref{rem:0} it is enough to consider $\delta=[1^t,p^{t'}]$.
 So we have to consider three possible cases as follows:

\item[(i)] $\delta=[1^{n-2k},2^k]$ and $k\geq2$.

Let $\delta'=[1^{n-2k},4^1,2^{(k-2)}]$,
$$\frac{|C_{S_n}(\delta)|}{|C_{S_n}(\delta')|}=\frac{2^kk!(n-2k)!}{4(k-2)!2^{k-2}(n-2k)!}=k(k-1)\in \mathbb{N}.$$
Since $|C_{S_n}(\delta')|$ divides $|C_{S_n}(\delta)|$, we conclude $|\delta^{S_n}|$ divides $|\delta'^{S_n}|$. Hence $v_{\delta}$ is adjacent to $v_{\delta'}$. Also by Lemma~\ref{lem:2}, there exists a positive integer $s$ such that
$$\frac{|C_{S_n}(\tau)|}{|C_{S_n}(\delta')|}=\frac{2(n-2)!}{4(k-2)!2^{k-2}(n-2k)!}=\frac{(n-2)!}{2^{k-1}(k-2)!(n-2k)!}={n-2\choose 2(k-1)}s\in \mathbb{N}.$$
So $v_{\delta'}$ is adjacent to $v_{\tau}$ and there is a path of length two between $v_{\delta}$ and $v_{\tau}$.

\item[(ii)] $\delta=[1^{n-kp},p^k]$, $p\neq 2$ and $kp\leq n-2$.

Let $\delta'=(\alpha\ \beta)\delta$, where $\alpha$ and $\beta$ are two points fixed by $\delta$. So $\delta'=[1^{n-kp-2},2^1,p^k]$ and we obtain
$$\frac{|C_{S_n}(\delta)|}{|C_{S_n}(\delta')|}=\frac{k!p^{k}(n-kp)!}{2.k!p^{k}.(n-kp-2)!}
=\frac{(n-kp)(n-kp-1)}{2}\in \mathbb{N}.$$ Hence $v_{\delta}$ is adjacent to $v_{\delta'}$. Also by Lemma~\ref{lem:2}, there exists a positive integer $s$ such that
$$\frac{|C_{S_n}(\tau)|}{|C_{S_n}(\delta')|}=\frac{2(n-2)!}{2k!p^k(n-kp-2)!}
=\frac{(n-2)!}{k!p^k(n-kp-2)!}\\={n-2\choose kp}s\in \mathbb{N}.$$
So there is a path of length $2$ between $v_{\delta}$ and $v_{\tau}$.

\item[(iii)] $\delta=[1^{n-pk},p^k]$, $p\neq 2$ and $kp> n-2$.

In this case we have the following three subcases:

\item [1)] $k\geq 3$.

Let $\delta'=[1^{n-pk},(2p)^1, p^{(k-2)}]$,
$$\frac{|C_{S_n}(\delta)|}{|C_{S_n}(\delta')|}=\frac{k!p^k(n-kp)!}{2p(k-2)!p^{k-2}(n-kp)!}=\frac{pk(k-1)}{2}\in \mathbb{N}.$$
Again we can conclude that $|\delta^{S_n}|$ divides $|\delta'^{S_n}|$ and so $v_{\delta}$ is adjacent to $v_{\delta'}$. Since $p\neq 2$, we have $(k-1)p<kp-2$. Therefore $((k-1)p)!$ divides $(kp-2)!$. Hence by this fact and  Lemma~\ref{lem:2}, we can find positive integers $s$ and $s'$ such that
\setlength\arraycolsep{1.4pt}
\begin{eqnarray*}
\frac{|C_{S_n}(\tau)|}{|C_{S_n}(\delta')|}&=&\frac{2(n-2)!}{2p(k-2)!p^{k-2}(n-kp)!}=\frac{(n-2)!}{(k-2)!p^{k-1}(n-kp)!}\\&=&
\frac{(k-1)(n-2)!}{(k-1)!p^{k-1}(n-kp)!}=\frac{s(k-1)(n-2)!}{((k-1)p)!(n-kp)!}=\frac{s s'(k-1)(n-2)!}{(kp-2)!(n-kp)!}\\&=&‎s s'(k-1){n-2\choose kp-2}\in \mathbb{N}.
\end{eqnarray*}
This means that $v_{\delta'}$ is adjacent to $v_{\tau}$ and there exists a path of length two between $v_{\delta}$ and $v_{\tau}$.
\item [2)] $k=2$.

Let $\delta'=[1^{n-p},p^1]$. Since $p>2$, there exists a positive integer $s$ such that
$$\frac{|C_{S_n}(\delta')|}{|C_{S_n}(\delta)|}=\frac{p(n-p)!}{2p^2(n-2p)!}=\frac{(n-p)!}{2p(n-2p)!}={n-p\choose p}s\in \mathbb{N}.$$
Thus we can conclude $|\delta'^{S_n}|$ divides $|\delta^{S_n}|$. Hence $v_{\delta}$ is adjacent to $v_{\delta'}$. Also $p\leq n-2$, so by case(ii) there is a path of length two between $v_{\delta'}$ and $v_{\tau}$.
\item [3)] $k=1$.

In this case $\delta$ is a $p$-cycle. So by Lemma~\ref{lem:8}, $v_{\delta}$ is an isolated vertex.
\end{proof}
\begin{corollary}\label{cor:2}
$D(S_n)$ has at most two connected components. If it is disconnected then one of its connected components is $K_1$.
\end{corollary}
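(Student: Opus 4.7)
The plan is to split the theorem into an isolation claim and a connectivity claim. For isolation, I take $\delta$ to be a $p$-cycle with $p = n-i$, $i \in \{0,1\}$. Lemma~\ref{lem:1} gives $|C_{S_n}(\delta)| = p\,(n-p)!$, and since $p\in\{n-1,n\}$ this equals $p$. If $v_\delta$ had a neighbor $v_{\delta'}$ with $\delta'$ not a $p$-cycle, then setting $x = |C_{S_n}(\delta')|$, adjacency would force either $x \mid p$ or $p \mid x$. The second option contradicts Lemma~\ref{lem:8} directly, and the first forces $x \in \{1, p\}$, again impossible for a distinct non-identity vertex.

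For connectivity, I fix a transposition $\tau$ and use $v_\tau$ as a hub. By Remark~\ref{rem:0}, $v_\delta$ is adjacent to $v_{\delta^m}$ for every positive integer $m$. Taking $m = |\delta|/p$ for some prime $p$ dividing $|\delta|$, I can replace $\delta$ by a power of prime order, which in $S_n$ has cycle type $[1^t, p^k]$. It therefore suffices to connect every non-isolated vertex of type $[1^t, p^k]$ to $v_\tau$.

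The remaining work is a case analysis by $p$ and by the number of fixed points of $\delta$. For $p = 2$ with $k \geq 2$, I bridge through $\delta' = [1^{n-2k}, 4, 2^{k-2}]$. For odd $p$ with $kp \leq n-2$ (so $\delta$ fixes at least two points $\alpha,\beta$), I bridge through $\delta' = (\alpha\ \beta)\delta = [1^{n-kp-2}, 2, p^k]$. For odd $p$ with $kp > n-2$: if $k = 1$ we are in the isolated case; if $k = 2$ I shift first to $\delta' = [1^{n-p}, p]$, which satisfies the two-fixed-points hypothesis (since $n-p \geq p \geq 3$) and is handled by the previous case; and if $k \geq 3$ I bridge through $\delta' = [1^{n-pk}, 2p, p^{k-2}]$. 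In each case, Lemma~\ref{lem:1} gives the centralizer orders explicitly, and Lemma~\ref{lem:2} lets me rewrite the key ratios $|C_{S_n}(\tau)|/|C_{S_n}(\delta')|$ as positive integer multiples of a binomial coefficient.

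The main obstacle will be the subcase $p$ odd, $kp > n-2$, $k \geq 3$. Here verifying $|C_{S_n}(\delta')| \mid |C_{S_n}(\tau)|$ is delicate: I would need to use $p \geq 3$ to deduce $(k-1)p < kp - 2$, so that $((k-1)p)!$ divides $(kp-2)!$, and then combine this with Lemma~\ref{lem:2} (twice, once to handle the $p$-cycle factor and once to extract the factorial) to express the ratio as a positive integer multiple of $\binom{n-2}{kp-2}$. The other cases reduce to direct manipulation of Lemma~\ref{lem:1} together with a single application of Lemma~\ref{lem:2}.
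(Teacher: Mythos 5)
Your proposal is, in substance, a re-derivation of Theorem~\ref{thm:9} rather than a proof of the corollary. The computational core is correct and matches the paper's proof of that theorem step for step: the reduction to $\delta=[1^t,p^k]$ via Remark~\ref{rem:0}, the bridges $[1^{n-2k},4,2^{k-2}]$, $(\alpha\ \beta)\delta$, $[1^{n-p},p]$ and $[1^{n-pk},2p,p^{k-2}]$, and, in the hard subcase, the combination of $((k-1)p)!\mid(kp-2)!$ with Lemma~\ref{lem:2} to exhibit the ratio as an integer multiple of $\binom{n-2}{kp-2}$. (Your handling of the isolation claim is even slightly more careful than the paper's, since you also dispose of the possibility $x\mid p$.) But the one piece of content that the corollary adds to Theorem~\ref{thm:9} is absent: you never bound the \emph{number} of isolated vertices. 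What you establish is that the isolated vertices are exactly the classes of $p$-cycles with $p\in\{n-1,n\}$ prime and that all remaining vertices form a single component; as stated, this is compatible with there being two isolated vertices and hence three components. You need the observation that $n$ and $n-1$ are consecutive, so for $n\geq 6$ one of them is even and greater than $2$ and therefore at most one of them is prime; hence there is at most one isolated vertex. That single sentence, together with Theorem~\ref{thm:9} and the small cases, is the paper's entire proof of the corollary.

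A second, smaller gap: your argument (like Theorem~\ref{thm:9}) is only valid for $n>6$, whereas the corollary carries no restriction on $n$. The cases $n\leq 6$ must be checked directly from the lists of conjugacy class sizes, as the paper does via Figure~\ref{fig:1}; note in particular that for $n=3$ both of $n$ and $n-1$ are prime, and the conclusion there holds only because $D(S_3)$ has just two vertices in total.
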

\begin{proof}
We know that for $n\geq 6$, at most one of $n$ or $n-1$ is a prime. By Theorem~\ref{thm:9} and Figure~\ref{fig:1}, we obtain the result.
\end{proof}

\section{Divisibility graph for $A_n$}
\label{sec:div2}
In this section we consider the divisibility graph for the alternating group $A_n$. We will
show that $D(A_n)$ has at most three connected components and if it is disconnected then
two of its connected components are $K_1$.
We denote $|C_{A_n}(\delta)|=(\frac{1}{2})^\sharp x$ when we
do not know whether $|C_{A_n}(\delta)|=\frac{1}{2}x $ or $
|C_{A_n}(\delta)|=x$ for some $ x\in \mathbb{N}$.
\begin{remark}\label{rem:12}
It is easy to see that $D(A_1)$, $D(A_2)$ and $D(A_3)$ are null graphs. By using \textsf{GAP}~\cite{GAP} one can see that for
$n=4,~5,~6,~7$ and $8$, $D(A_n)$ has at most three connected components (see Figure~\ref{fig:2}).
\begin{figure}[here]
\begin{center}
\includegraphics[height=16cm]{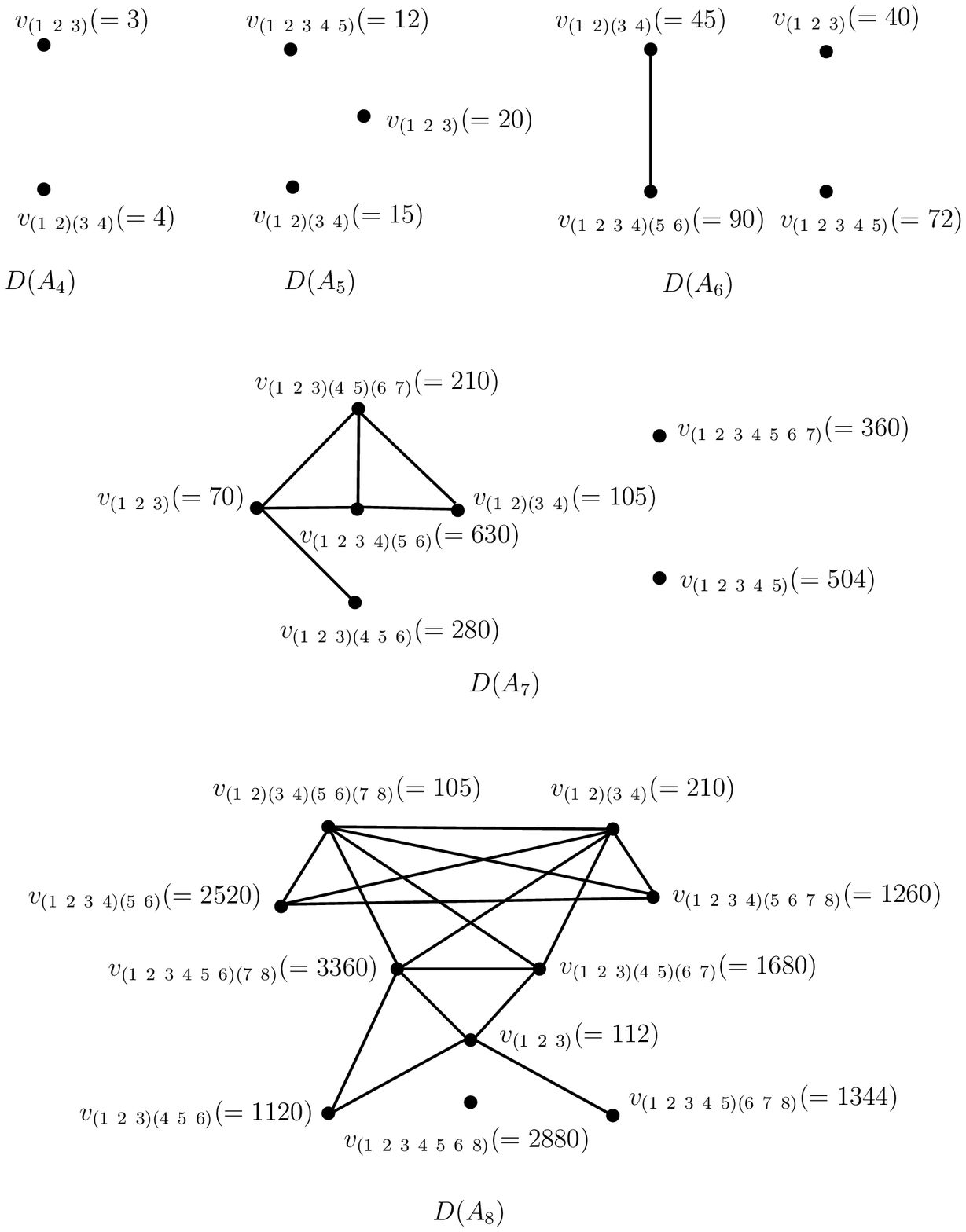}
\caption{The graph $D(A_n)$ for $n=4, 5, 6, 7$ and $8$.}
\label{fig:2}
\end{center}
\end{figure}
\end{remark}
In the rest of this section let $n\geq 9$.
\begin{lemma}\label{lem:11}
Let $1\neq\delta\in A_n$ and $p\geq n-2$. Then $p$ divides $|C_{A_n}(\delta)|$ if and only if $\delta$ is a cycle of length $p$, that is, $\delta=[1^2,p]$, $\delta=[1,p]$ or $\delta=[p]$.
\end{lemma}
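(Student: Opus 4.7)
The plan is to adapt the argument of Lemma~\ref{lem:8} while accounting for the two ways the alternating-group setting changes things: one must pass between $|C_{A_n}(\delta)|$ and $|C_{S_n}(\delta)|$ via Corollary~\ref{cor:7}, and one must use the constraint $\delta\in A_n$ to discard odd permutations that become possible now that the bound on $p$ is relaxed from $n-1$ to $n-2$.

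As a first step, since $n\ge 9$ forces $p\ge n-2\ge 7$, the prime $p$ is odd. Combined with Corollary~\ref{cor:7}, which gives $|C_{S_n}(\delta)|\in\{|C_{A_n}(\delta)|,\,2|C_{A_n}(\delta)|\}$, this yields $p\mid |C_{A_n}(\delta)|$ if and only if $p\mid |C_{S_n}(\delta)|$. So the forward direction reduces to analysing divisibility of the product $|C_{S_n}(\delta)|=(\prod_i k_i!\,m_i^{k_i})\,t!$ supplied by Lemma~\ref{lem:1}, exactly as in the proof of Lemma~\ref{lem:8}.

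For the forward direction, I would write $\delta=[1^t,m_1^{k_1},\ldots,m_r^{k_r}]$ and case-split on which factor absorbs $p$. The cases $p\mid k_j!$ and $p\mid t!$ are handled as in Lemma~\ref{lem:8}: $p\mid k_j!$ gives $k_jm_j\ge 2p>n$, which is impossible, and $p\mid t!$ forces $t\ge p\ge n-2$, so $\delta$ moves at most two points and is therefore a transposition, contradicting $\delta\in A_n$ by parity. The remaining case is $p\mid m_j$; since $m_j\le n<2p$ and $p$ is prime this gives $m_j=p$ and $k_j=1$, and the remaining $n-p\in\{0,1,2\}$ points break into fixed points or one further cycle. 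The candidate shapes are $[p]$, $[1,p]$, $[1^2,p]$ and $[2,p]$, and the new case $[2,p]$ is the one that would not arise in Lemma~\ref{lem:8}; it is eliminated by the parity check that the odd-length $p$-cycle is even while the transposition is odd, so $[2,p]\notin A_n$. This leaves precisely the three shapes in the statement.

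For the converse I would simply evaluate $|C_{A_n}(\delta)|$ in each of the three shapes: Corollary~\ref{cor:7} yields $|C_{A_n}(\delta)|=|C_{S_n}(\delta)|=p\cdot t!$ when $t\le 1$ (since $\delta$ consists of a single odd cycle with multiplicity one), and Lemma~\ref{lem:5} yields $|C_{A_n}(\delta)|=\tfrac12|C_{S_n}(\delta)|=p$ when $t=2$; in every case $p$ divides the centralizer order. The only real obstacle beyond Lemma~\ref{lem:8} is the parity argument used to discard $[2,p]$ in the relaxed range $p\ge n-2$; everything else runs parallel to the symmetric-group proof.
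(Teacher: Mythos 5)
Your proposal is correct and follows essentially the same route as the paper: reduce to the factorization $|C_{S_n}(\delta)|=(\prod_i k_i!\,m_i^{k_i})\,t!$ via Lemma~\ref{lem:1} and Corollary~\ref{cor:7}, case-split on which factor $p$ divides, and use $\delta\in A_n$ to kill the transposition case. Your explicit parity elimination of the shape $[2,p]$ is in fact a touch more careful than the paper, which disposes of the case $p=m_j$ by citing the argument of Lemma~\ref{lem:8} without spelling this out.
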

\begin{proof}
First we assume that $p$ divides $|C_{A_n}(\delta)|$. Suppose, to the contrary, that $\delta=[1^t,m_1^{k_{1}},...,m_r^{k_{r}}]$ is not a $p$-cycle. By Lemma~\ref{lem:1}, Lemma~$\ref{lem:3}$ and Corollary~$\ref{cor:7}$, $|C_{A_n}(\delta)|=(\frac{1}{2})^\sharp(\prod_{i=1}^rk_i!{m_i}^{k_i})t!$. Since $p$ divides $|C_{A_n}(\delta)|$ we conclude that either $p$ divides $t!$ or there exists $j$ such that $p$ divides $m_j$ or $k_j!$.
If $p$ divides either $m_j$ or $k_j!$, then a similar argument to the proof of Lemma~\ref{lem:8} shows that
in this case either $p=m_j$ or $k_jm_j\geq 2n-4>n$, which is a contradiction.
If $p$ divides $t!=(n-\sum_{i=1}^{r}k_im_i)!$, then $\sum_{i=1}^{r}k_im_i\leq2$. Hence $\delta$
should be a transposition, which is a contradiction too.

Now let $\delta$ be a cycle of length $p$. Note that $p$ is an odd number and every cycle of length $p$ is an even permutation. In this case $|C_{A_n}(\delta)|=(\frac{1}{2})^\sharp p(n-p)!.$
Hence $p$ divides $|C_{A_n}(\delta)|$.
\end{proof}
Before proving the main theorem of this section we shall prove two lemmas.
\begin{lemma}\label{lem:14}
Let $1\neq\delta=[1^t,m_1^{k_{1}},...,m_r^{k_{r}}]\in A_n$. If there exists $j$ such that $k_j=1$ and $m_j=3$ then $v_{\delta}$ is adjacent to  $v_{(1\ 2\ 3)}$ in $D(A_n)$.
\end{lemma}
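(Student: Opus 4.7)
The plan is to prove adjacency by showing directly that $|C_{A_n}(\tau)|$ is a multiple of $|C_{A_n}(\delta)|$, where $\tau=(1\ 2\ 3)$; this is equivalent to $|\tau^{A_n}|$ dividing $|\delta^{A_n}|$, which gives the edge $v_\delta v_\tau$ in $D(A_n)$. I would first establish the analogous divisibility $|C_{S_n}(\delta)|\mid|C_{S_n}(\tau)|$ in $S_n$ and then transfer it to $A_n$ using Lemma~\ref{lem:5} and Corollary~\ref{cor:7}.

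For the $S_n$ step, Lemma~\ref{lem:1} gives $|C_{S_n}(\tau)|=3(n-3)!$, and extracting the factor $k_j! m_j^{k_j}=3$ contributed by the index $j$ yields $|C_{S_n}(\delta)|=3\,t!\prod_{i\neq j} k_i!\,m_i^{k_i}$. Setting $u=\sum_{i\neq j}k_i m_i=n-t-3$, Lemma~\ref{lem:2} supplies a positive integer $s$ with $u!=s\prod_{i\neq j} k_i!\,m_i^{k_i}$, so that
\[
\frac{|C_{S_n}(\tau)|}{|C_{S_n}(\delta)|}=\frac{(n-3)!}{t!\prod_{i\neq j} k_i!\,m_i^{k_i}}=s\binom{n-3}{t}\in\mathbb{N}.
\]
Hence $|C_{S_n}(\delta)|$ divides $|C_{S_n}(\tau)|$. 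Because $\tau$ fixes $n-3\geq 6$ points, Lemma~\ref{lem:5} gives $|C_{A_n}(\tau)|=\tfrac12|C_{S_n}(\tau)|$; and by Corollary~\ref{cor:7}, $|C_{A_n}(\delta)|$ is either $\tfrac12|C_{S_n}(\delta)|$ or $|C_{S_n}(\delta)|$. In the former subcase, the divisibility survives halving both centralizers, giving the conclusion at once.

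The main obstacle is the second subcase, where $|C_{A_n}(\delta)|=|C_{S_n}(\delta)|$, since there we need the stronger fact $2\,|C_{S_n}(\delta)|\mid|C_{S_n}(\tau)|$, i.e.\ that $s\binom{n-3}{t}$ is even. Corollary~\ref{cor:7} forces $\delta=[1^t,m_1^1,\ldots,m_r^1]$ with every $m_i$ odd and $t\leq 1$; since the $m_i$ are distinct, odd and $\neq 1$, while $m_j=3$, every other $m_i$ satisfies $m_i\geq 5$, and the hypothesis $n\geq 9$ together with $t\leq 1$ forces $r\geq 2$. Therefore the ``additional'' clause of Lemma~\ref{lem:2} (some $m_i\geq 3$) applies to the restricted product and yields $2\prod_{i\neq j}k_i!\,m_i^{k_i}\mid u!$, so $s$ is even, say $s=2s'$. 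The ratio then refines to $|C_{A_n}(\tau)|/|C_{A_n}(\delta)|=s'\binom{n-3}{t}\in\mathbb{N}$, completing the argument.
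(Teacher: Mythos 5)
Your proof is correct and follows essentially the same route as the paper: both compute the centralizer ratio $|C_{A_n}(\tau)|/|C_{A_n}(\delta)|$, reduce it to a binomial coefficient via Lemma~\ref{lem:2} after extracting the factor $3$, and dispose of the troublesome factor $2$ in the exceptional case of Corollary~\ref{cor:7} by noting that $n\geq 9$ forces some remaining part $m_i\geq 5$, so the strengthened clause of Lemma~\ref{lem:2} applies. Your version merely makes the paper's ``$(\tfrac12)^\sharp$'' bookkeeping explicit by splitting into the two subcases.
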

\begin{proof}
Without loss of generality we may assume that $k_1=1$ and $m_1=3$. Let $x=\sum_{i=1}^{r}k_im_i$.
By Corollary~\ref{cor:7} and Lemma~\ref{lem:2}, there exists a positive integer $s$ such that
\setlength\arraycolsep{1.4pt}
\begin{eqnarray*}
\frac{|C_{A_n}((1\ 2\ 3))|}{|C_{A_n}(\delta)|}&=&
\frac{\frac{1}{2}.3.(n-3)!}{(\frac{1}{2})^\sharp.3.(\prod_{i=2}^rk_i!{m_i}^{k_i})(n-x)!}=\frac{(n-3)!}{(2)^\sharp(\prod_{i=2}^rk_i!{m_i}^{k_i})(n-x)!}
\\&=&{n-3\choose x-3}s\in \mathbb{N}.
\end{eqnarray*}
Note that by Corollary~\ref{cor:7},  if $2$ appears in
denominator then we would have $t\leq 1,$ $~k_i = 1$ and $m_i$ odd for $1\leq i\leq r$. Since $n\geq 9$, there
exists $i$ such that $m_i\geq 5$, so by Lemma~\ref{lem:2}, we can remove $``2"$ from the denominator.

Thus $|C_{A_n}(\delta)|$ divides $|C_{A_n}((1\ 2\ 3))|$. Therefore $|(1\ 2\ 3)^{A_n}|$ divides $|\delta^{A_n}|$. So $v_{\delta}$ is adjacent to $v_{(1\ 2\ 3)}$.
\end{proof}
\begin{lemma}\label{lem:15}
Let $1\neq\delta=[1^t,m_1^{k_{1}},...,m_r^{k_{r}}]\in A_n$. If $t\geq 3$ and for each $i$, $m_i\neq3$ then there is a path of length two between $v_{\delta}$ and $v_{(1\ 2\ 3)}$ in $D(A_n)$.
\end{lemma}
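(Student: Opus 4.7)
The plan is to produce a common neighbor of $v_\delta$ and $v_{(1\ 2\ 3)}$ by splicing a $3$-cycle into $\delta$ on three of its fixed points, and then appeal to Lemma~\ref{lem:14} for one edge and a direct centralizer computation for the other.

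Concretely, I would first pick three distinct fixed points $\alpha,\beta,\gamma$ of $\delta$ (available since $t\geq 3$) and define $\delta'=(\alpha\ \beta\ \gamma)\delta$. Then $\delta'$ has cycle type $[1^{t-3},3^1,m_1^{k_{1}},\ldots,m_r^{k_{r}}]$ and lies in $A_n$ because $(\alpha\ \beta\ \gamma)$ is even. Since $\delta'$ has a $3$-cycle of multiplicity one, Lemma~\ref{lem:14} applies verbatim and yields $v_{\delta'}$ adjacent to $v_{(1\ 2\ 3)}$.

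Next I would establish the other edge, $v_\delta\sim v_{\delta'}$, by comparing centralizers. Using Lemma~\ref{lem:1}, the $S_n$-centralizers of $\delta$ and $\delta'$ share the factor $\prod_{i=1}^r k_i!\,m_i^{k_i}$, so
\[
\frac{|C_{S_n}(\delta)|}{|C_{S_n}(\delta')|}=\frac{t!}{3\,(t-3)!}=\frac{t(t-1)(t-2)}{3},
\]
which is a positive integer since one of three consecutive integers is divisible by $3$. Since $\delta$ fixes at least two points, Lemma~\ref{lem:5} gives $|C_{A_n}(\delta)|=\tfrac12|C_{S_n}(\delta)|$, while Corollary~\ref{cor:7} allows either $|C_{A_n}(\delta')|=\tfrac12|C_{S_n}(\delta')|$ or $|C_{A_n}(\delta')|=|C_{S_n}(\delta')|$. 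In the first case the ratio $|C_{A_n}(\delta)|/|C_{A_n}(\delta')|$ equals $t(t-1)(t-2)/3$, in the second it equals $t(t-1)(t-2)/6=\binom{t}{3}$; both are integers. Hence $|C_{A_n}(\delta')|$ divides $|C_{A_n}(\delta)|$, so $|\delta^{A_n}|$ divides $|\delta'^{A_n}|$ and $v_\delta$ is adjacent to $v_{\delta'}$ in $D(A_n)$ (or the two coincide as vertices, in which case Lemma~\ref{lem:14} already exhibits the connection).

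The only real (and minor) obstacle is bookkeeping the $\tfrac12$ factors introduced when passing from $S_n$- to $A_n$-centralizers, which may be present on one side but not the other when $t\in\{3,4\}$ and $\delta'$ has at most one fixed point. The resolution is the observation that $t(t-1)(t-2)$ is always divisible by $6$, so the divisibility $|C_{A_n}(\delta')|\mid|C_{A_n}(\delta)|$ holds uniformly without splitting into subcases. Concatenating the two edges then yields a path of length at most two from $v_\delta$ to $v_{(1\ 2\ 3)}$ in $D(A_n)$, as claimed.
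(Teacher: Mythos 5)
Your proposal is correct and follows essentially the same route as the paper: splice a $3$-cycle onto three fixed points to get $\delta'=(\alpha\ \beta\ \gamma)\delta$, show $|C_{A_n}(\delta')|$ divides $|C_{A_n}(\delta)|$ via the ratio $t(t-1)(t-2)$ over $3$ or $6$, and invoke Lemma~\ref{lem:14} for the edge to $v_{(1\ 2\ 3)}$. Your explicit bookkeeping of the $\tfrac12$ factors (and the remark about the degenerate case where the two class sizes coincide) is, if anything, slightly more careful than the paper's $(2)^\sharp$ shorthand.
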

\begin{proof}
Let $\delta'=(\alpha\ \beta\ \gamma)\delta$, where $\alpha$, $\beta$ and $\gamma$ are three points fixed by $\delta$.
By Corollary~\ref{cor:7},
$$\frac{|C_{A_n}(\delta)|}{|C_{A_n}(\delta')|}=\frac{\frac{1}{2}(\prod_{i=1}^rk_i!{m_i}^{k_i})t!}{(\frac{1}{2})^\sharp.3.
(\prod_{i=1}^rk_i!{m_i}^{k_i})(t-3)!}\\=\frac{t(t-1)(t-2)}{(2)^\sharp.3}\in \mathbb{N}.$$
This implies that $|C_{A_n}(\delta')|$ divides $|C_{A_n}(\delta)|$ and hence $|\delta^{A_n}|$ divides $|\delta'^{A_n}|$.
So $v_{\delta}$ is adjacent to $v_{\delta'}$ and by Lemma~\ref{lem:14}, $v_{\delta'}$ is adjacent to $v_{(1\ 2\ 3)}$.
\end{proof}
\begin{theorem}\label{thm:13}
Let $1\neq\delta\in A_n$. If $\delta$ is a $p$-cycle where $p\geq n-2$ then $v_{\delta}$ is an
isolated vertex of $D(A_n)$. The other vertices are in a single connected component.
\end{theorem}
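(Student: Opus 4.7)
The proof plan parallels that of Theorem~\ref{thm:9}, adapted to $A_n$ via Lemmas~\ref{lem:11}, \ref{lem:14}, and \ref{lem:15} together with the parity bookkeeping of Corollary~\ref{cor:7}. For the isolation claim, suppose $\delta$ is a $p$-cycle with $p\in\{n-2,n-1,n\}$. A direct application of Lemma~\ref{lem:1} and Corollary~\ref{cor:7} gives $|C_{A_n}(\delta)|=p$ in each subcase (when $p\in\{n-1,n\}$ one is in the ``special case'' of Corollary~\ref{cor:7} and $(n-p)!\leq 1$; when $p=n-2$ the factor $(n-p)!=2$ cancels the $1/2$ from the ``other case''). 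If $v_{\delta}$ were adjacent to some $v_{\delta'}$ with $\delta'\not\sim_{A_n}\delta$, then either $|C_{A_n}(\delta')|\mid p$ or $p\mid |C_{A_n}(\delta')|$; Lemma~\ref{lem:11} forces $\delta'$ to be a $p$-cycle in either case (and $|C_{A_n}(\delta')|=1$ is excluded since $\delta'\neq 1$), so $|\delta'^{A_n}|=|\delta^{A_n}|$ and $v_{\delta'}=v_{\delta}$, a contradiction.

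For the connectedness claim, I fix $v_{(1\,2\,3)}$ as a hub. By Remark~\ref{rem:0} it suffices to handle $\delta=[1^t,p^k]\in A_n$ of prime order $p$; the reduction is safe because a $p$-cycle with $p\geq n-2$ arising as a power of $\delta\in A_n$ would force $\delta$ itself to be such a $p$-cycle (the only alternative, $\delta=[1^{n-p-2},2,p]$, is odd). The cases $(p,k)=(3,1)$ and $(p\neq 3,\,t\geq 3)$ are handled by Lemma~\ref{lem:14} and Lemma~\ref{lem:15} respectively. In the remaining cases I construct an intermediate $\delta'\in A_n$ adjacent to $\delta$, to which one of those lemmas applies. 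For $p=3,k=2$: take $\delta'=[1^{t+3},3^1]$, with ratio $|C_{A_n}(\delta')|/|C_{A_n}(\delta)|=(t+1)(t+2)(t+3)/6$. For $p=3,k\geq 3$: combine three $3$-cycles into a single $9$-cycle (parity is preserved, since three even $3$-cycles and one even $9$-cycle have the same sign), giving $\delta'=[1^t,9^1,3^{k-3}]$ with ratio $3k(k-1)(k-2)$ up to an always-integral parity factor, and iterate until at most one $3$-cycle remains. For $p\geq 5,k=2$: take $\delta'=[1^{t+p},p^1]$, whose ratio is $\binom{t+p}{p}(p-1)!/2$, integral since $(p-1)!$ is even for $p\geq 3$; then $t'=t+p\geq 5\geq 3$, so Lemma~\ref{lem:15} finishes. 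For $p\geq 5,k\geq 3$: either collapse $k-1$ of the $p$-cycles to fixed points by $\delta'=[1^{t+(k-1)p},p^1]$ (integrality follows by combining the multinomial coefficient $\binom{t+(k-1)p}{t,p,\ldots,p}$ with Lemma~\ref{lem:2}), or, when the canonical collapse fails, split into two odd-length cycles $\delta'=[1^t,(p(k-1))^1,p^1]$. For $p=2,k$ even $\geq 2$ with $t\leq 2$: swap two $2$-cycles for a $3$-cycle and a fixed point, giving $\delta'=[1^{t+1},3^1,2^{k-2}]$, and in the residual cases where the ratio $4k(k-1)/(3(t+1))$ fails to be integral, route through a mixed-cycle intermediate preserving even parity, such as $[1^{t'},4^1,2^1,3^1,2^{k-4}]$.

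The main obstacle is the parity-aware divisibility bookkeeping: the $1/2$ factor from Corollary~\ref{cor:7} sometimes cancels on both sides of the ratio $|C_{A_n}(\delta)|/|C_{A_n}(\delta')|$ and sometimes does not, depending on whether $\delta$ or $\delta'$ falls into the ``special case'' (all cycles odd of multiplicity one with $t\leq 1$). Each constructed intermediate must be checked against Corollary~\ref{cor:7} to ensure the parity factors align. A secondary subtlety is the $p=2$ case with small $t$, where the constraint $\delta'\in A_n$ restricts the choice of intermediate and a sub-case analysis on the residue of $k$ modulo a small fixed integer may be required.
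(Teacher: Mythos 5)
Your isolation argument, the reduction via Remark~\ref{rem:0} (including the observation that the only non-$p$-cycle root of a long $p$-cycle would be $[1^{n-p-2},2,p]$, which is odd -- a point the paper leaves implicit), and the cases settled by Lemmas~\ref{lem:14} and~\ref{lem:15} together with your $k=2$ constructions are sound and essentially match the paper. The genuine gaps are in the large-$k$ cases, where your intermediates differ from the paper's and the required divisibility actually fails. For $p=3$, $k\ge3$: the first merge $[1^t,3^k]\to[1^t,9,3^{k-3}]$ is fine, but the subsequent steps of your iteration are not edges in general; the step from $[1^t,9^a,3^b]$ to $[1^t,9^{a+1},3^{b-3}]$ has centralizer ratio $3b(b-1)(b-2)/(a+1)$, which for instance is $9/2$ at $[1^t,9^3,3^3]\to[1^t,9^4]$ and $18/5$ at $[1^t,9^4,3^3]\to[1^t,9^5]$, with the reciprocal not an integer either. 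Moreover, when the iteration ends at $[1^t,9^a]$ with $t\le2$, neither Lemma~\ref{lem:14} nor Lemma~\ref{lem:15} applies, so the endgame is missing. The paper avoids iteration entirely: after the single merge it joins $[1^t,9,3^{k-3}]$ directly to $v_{\tau}$ using Lemma~\ref{lem:2} and a binomial coefficient.

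For $p\ge5$, $k\ge3$ the gap is worse. Your ``canonical collapse'' $\delta'=[1^{t+(k-1)p},p]$ needs $k!\,p^{k-1}t!$ to divide $(t+(k-1)p)!$; the multinomial-plus-Lemma~\ref{lem:2} argument only gives $(k-1)!\,p^{k-1}t!$ dividing it, and the extra factor $k$ genuinely fails: for $\delta=[5^5]\in A_{25}$ the ratio is $20!/(5!\cdot5^4)$, whose $5$-adic valuation is $4-5<0$, and the reciprocal is not an integer. Your fallback $[1^t,(p(k-1))^1,p^1]$ lies in $A_n$ only when $k$ is even (for odd $k$ the $p(k-1)$-cycle has even length, making the permutation odd), so it is unavailable exactly in cases such as $[5^5]$ where the collapse fails; and even when it is in $A_n$ you give no onward route to $v_{\tau}$, since $t\le2$ rules out Lemma~\ref{lem:15} and there is no $3$-cycle for Lemma~\ref{lem:14}. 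The $p=2$ residual (``such as $[1^{t'},4^1,2^1,3^1,2^{k-4}]$'') is plausible for $t=2$ but unverified, and for $t=0$ that shape does not even fit in $n$ points, while residual cases with $k\equiv2\pmod3$ do occur at $t=0,1$. The paper's proof handles all of these with different intermediates -- $[1^{n-kp},(2p)^2,p^{k-4}]$ followed by $[1^{n-2p-3},3,p^2]$ for $p\ge5$, $k\ge4$, and explicit two-step chains for $p=2$ -- each ending at a permutation containing a single $3$-cycle so that Lemma~\ref{lem:14} applies; until your constructions are repaired along such lines, the connectedness half of the theorem is not proved.
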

\begin{proof}
First we show that if $\delta$ is a cycle of length $p$ where $p\geq n-2$, then $v_{\delta}$ is an isolated vertex.
Let $p=n-i$ for $i\in \{0,1,2\}$. Then $|C_{A_n}(\delta)|=n-i$. Suppose $v_{\delta}$ has a
neighbor say $v_{\delta'}$, such that the cycle decomposition of $\delta'$ is not the same as $\delta$. Let $|C_{A_n}(\delta')|=x$.
In this case it is easy to see that~$n-i$ divides $x$ which is impossible by Lemma~\ref{lem:11}.

Now we are ready to show that the other vertices of $D(A_n)$ are all in the same connected component. We show that there
exists a path between any other arbitrary vertex and the vertex corresponding to an arbitrary $3$-cycle namely $v_{\tau}$.
By Lemma~\ref{lem:1} and Corollary~\ref{cor:7}, $|C_{A_n}(\tau)|=\frac{1}{2}.3.(n-3)!$. We will use Lemma~\ref{lem:1}, Corollary~\ref{cor:7} and Lemma~\ref{lem:2} for our calculation. As for $S_n$, when $\delta\in A_n$ there is a natural number $m$ such that $\delta^m=[1^t,p^{t'}]$, so by Remark~\ref{rem:0} it is enough to consider $\delta=[1^t,p^{t'}]$.
There are the following three possible cases:

\item[(i)] $\delta=[1^{n-3k},3^k]$ and $k\geq2$.

If $k\geq3$ then let $\delta'=[1^{n-3k},9^1,3^{(k-3)}]$. Obviously  $\delta'\in A_n$ and we obtain
$$\frac{|C_{A_n}(\delta)|}{|C_{A_n}(\delta')|}=\frac{\frac{1}{2}.3^k.k!(n-3k)!}{(\frac{1}{2})^\sharp.9.(3^{k-3})(k-3)!(n-3k)!}
=\frac{3k(k-1)(k-2)}{(2)^\sharp}\in \mathbb{N}.$$
Since $|C_{A_n}(\delta')|$ divides $|C_{A_n}(\delta)|$ we conclude $|\delta^{A_n}|$ divides $|\delta'^{A_n}|$. So $v_{\delta}$ is adjacent to $v_{\delta'}$. Also by Lemma~\ref{lem:2}, there exists a positive integer $s$ such that
\setlength\arraycolsep{1.4pt}
\begin{eqnarray*}
\frac{|C_{A_n}(\tau)|}{|C_{A_n}(\delta')|}&=&\frac{\frac{1}{2}.3.(n-3)!}{(\frac{1}{2})^\sharp.9.(3^{k-3})(k-3)!(n-3k)!}
=\frac{(n-3)!}{{(2)^\sharp}.3^{k-2}(k-3)!(n-3k)!}\\&=&s{n-3\choose n-3k}\in \mathbb{N}.
\end{eqnarray*}
So $v_{\delta'}$ is adjacent to $v_{\tau}$.

If $k=2$ then,
$$\frac{|C_{A_n}(\tau)|}{|C_{A_n}(\delta)|}=\frac{3(n-3)!}{18(n-6)!}=\frac{(n-3)(n-4)(n-5)}{6}\in \mathbb{N}.$$
Again we can obtain $|\tau^{A_n}|$ divides $|\delta^{A_n}|$. So $v_{\delta}$ is adjacent to $v_{\tau}$.

\item[(ii)] $\delta=[1^{n-kp},p^k]$, $p\neq3$ and $k>1$.

Note that if $kp\leq n-3$ then $\delta$ satisfies conditions of Lemma~\ref{lem:15}. So suppose $kp>n-3$.

We consider the following five subcases:

\item[1)] $k\geq 4$ and $p\neq2$. In this case let $\delta'=[1^{n-kp},(2p)^2,p^{(k-4)}]\in A_n$ and $x=kp$.
$$\frac{|C_{A_n}(\delta)|}{|C_{A_n}(\delta')|}=\frac{\frac{1}{2}k!p^k(n-kp)!}{\frac{1}{2}.2.{(2p)^2}(k-4)!p^{k-4}(n-kp)!}=
\frac{k!p^2}{8(k-4)!}\in \mathbb{N}.$$
This yields that $|\delta^{A_n}|$ divides $|\delta'^{A_n}|$. So $v_{\delta}$ is adjacent to $v_{\delta'}$.

Let $\delta''=[1^{n-2p-3},3^1,p^2]$. By Lemma \ref{lem:2} and this fact that $p\geq 5$, we can find positive integers $s$ and $s'$  such that‎
\setlength\arraycolsep{1.4pt}
\begin{eqnarray*}
\frac{|C_{A_n}(\delta'')|}{|C_{A_n}(\delta')|}&=&\frac{3(n-2p-3)!}{4(k-4)!p^{k-4}(n-kp)!}=\dfrac{3(n-2p-3)! s}{2(kp-4p)!(n-kp)!}\\&=&\dfrac{3(n-2p-3)!s s'}{(kp-2p-3)!(n-kp)!}=3s s'{n-2p-3\choose kp-2p-3}\in \mathbb{N}.
‎\end{eqnarray*}
So $v_{\delta'}$ is adjacent to $v_{\delta''}$. Now $\delta''$ satisfies conditions of Lemma~\ref{lem:14}. Therefore there is a path of length three between $v_{\delta}$ and $v_{\tau}$.
\item[2)] $k>4$ and $p=2$. Since $\delta\in A_n$, in this case we must have $k\geq 6$. Let $\delta'=[1^{n-2k},(2k-2)^1,2^1]$.
$$\frac{|C_{A_n}(\delta)|}{|C_{A_n}(\delta')|}=\frac{\frac{1}{2}.k!2^k(n-2k)!}{\frac{1}{2}.2(2k-2)(n-2k)!}\in \mathbb{N}.$$
So $v_{\delta}$ is adjacent to $v_{\delta'}$. Also let $\delta''=[1^{n-7},2^2,3^1]$.
$$\frac{|C_{A_n}(\delta'')|}{|C_{A_n}(\delta')|}=\frac{8.3.(n-7)!}{2(2k-2)(n-2k)!}\in \mathbb{N}.$$
So $v_{\delta'}$ is adjacent to $v_{\delta''}$. Now $v_{\delta''}$ satisfies conditions of Lemma~\ref{lem:14}. Therefore there is a path of length three between $v_{\delta}$ and $v_{\tau}$.

\item[3)] $k=4$ and $p=2$. Let $\delta'=[1^{n-8},4^2]$.
$$\frac{|C_{A_n}(\delta)|}{|C_{A_n}(\delta')|}=\frac{2^4.4!(n-8)!}{4^2.2!(n-8)!}\in \mathbb{N}.$$
This means $v_{\delta}$ is adjacent to $v_{\delta'}$. Also let $\delta''=[1^{n-4},2^2]$.
$$\frac{|C_{A_n}(\delta'')|}{|C_{A_n}(\delta')|}=\frac{2^2.2!(n-4)!}{4^2.2!(n-8)!}\in \mathbb{N}.$$
So $v_{\delta'}$ is adjacent to $v_{\delta''}$. By Lemma~\ref{lem:15} there is a path of length two between $v_{\delta''}$ and $v_{\tau}$

\item[4)] $1<k<4$ and $p=2$. In this case we have $n-2\leq kp\leq 6$ which is a contradiction with the assumption that $n\geq 9$.

\item[5)] $1<k<4$ and $p\neq 2$. Since $p$ is odd, $\delta'=[1^{n-kp+p},p^{(k-1)}]$ is an even permutation.
In this case there exists a positive integer $s$ such that
$$\frac{|C_{A_n}(\delta')|}{|C_{A_n}(\delta)|}=\frac{\frac{1}{2}(k-1)!p^{k-1}(n-kp+p)!}{\frac{1}{2}k!p^k(n-kp)!}=\frac{(n-kp+p)!}{kp(n-kp)!}={n-kp+p\choose p}s\in \mathbb{N}.$$
Therefore $|\delta'^{A_n}|$ divides $|\delta^{A_n}|$. So $v_{\delta}$ is adjacent to $v_{\delta'}$. Again
according to Lemma~\ref{lem:15} there is a path of length two between $v_{\delta'}$ and $v_{\tau}$.

\item[(iii)] $\delta=[1^{n-kp},p^k]$, $p\neq3$ and $k=1$.

By Lemma~\ref{lem:11}, together with our earlier assumption $kp>n-3$, the vertex $v_{\delta}$ is isolated.
\end{proof}
\begin{corollary}\label{cor:14}
$D(A_n)$ has at most three connected components. If it is disconnected, then two of its connected
components are $K_1$.
\end{corollary}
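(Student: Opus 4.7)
The plan is to split into small $n$ and large $n$, applying Theorem~\ref{thm:13} directly in the latter range together with an elementary parity count. For $n\leq 8$ I would simply appeal to Remark~\ref{rem:12}, which inspects $D(A_n)$ for $n=4,5,6,7,8$ in \textsf{GAP} and records the resulting structure in Figure~\ref{fig:2}; in each of these small cases one reads off directly that there are at most three components and that any isolated component is a copy of $K_1$.

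For $n\geq 9$, Theorem~\ref{thm:13} reduces the task to counting the isolated vertices, all of which come from $A_n$-classes of $p$-cycles with $p\geq n-2$. A $p$-cycle is an even permutation if and only if $p$ is odd, and any cycle acts on at most $n$ points, so the relevant primes lie in $\{n-2,n-1,n\}$ and are odd. Among any three consecutive integers at most two have odd parity, so this set contains at most two odd primes, contributing at most two isolated vertices. Together with the single connected component supplied by Theorem~\ref{thm:13} containing every other vertex, this yields at most $1+2=3$ connected components in total, and each isolated component is a $K_1$.

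The argument is essentially bookkeeping and I do not foresee a genuine obstacle. The one point that deserves explicit verification is that different primes among $\{n-2,n-1,n\}$ really produce different vertices of $D(A_n)$, rather than being accidentally identified through coincident class sizes. A direct application of Lemma~\ref{lem:1} and Corollary~\ref{cor:7} shows that the centralizer of such a $p$-cycle in $A_n$ has order exactly $p$, so the class sizes $|A_n|/p$ attached to distinct primes $p$ are themselves distinct, and the (at most two) candidate isolated vertices are genuinely distinct whenever they occur. Combined with the preceding paragraph, this establishes the corollary.
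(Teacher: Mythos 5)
Your argument is correct and essentially identical to the paper's: both handle $n\leq 8$ via Remark~\ref{rem:12} and, for $n\geq 9$, combine Theorem~\ref{thm:13} with the observation that at most two of $n$, $n-1$, $n-2$ can be prime, so at most two isolated vertices ($K_1$ components) can occur. Your extra check that the candidate isolated vertices are distinct is harmless but not needed for the upper bound.
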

\begin{proof}
We know that for any positive integer $n$, at most two of the positive integers $n$, $n-1$ and $n-2$ are primes.
Hence by Theorem~\ref{thm:13} and Remark~\ref{rem:12}, we obtain the result.
\end{proof}
\begin{remark}
By using the fact that the distance between
any vertices of $D(S_n)$ and $v_{\tau}$ is at most $4$ (see proof of Theorem~\ref{thm:9} and Remark~\ref{rem:0}) we can find
that $diam(D(S_n))\leq 8$. A similar argument and using the proof of
Theorem~\ref{thm:13}, shows that $diam(D(A_n))\leq 10.$
\end{remark}
By considering $D(S_n)$ and $D(A_n)$ for some values of $n$ we may pose the following conjecture.
\begin{conj} The best upper bound for the diameter of $D(S_n)$ and $D(A_n)$ is $4$.
\end{conj}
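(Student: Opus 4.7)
The conjecture asserts that $\diam(D(S_n))\le 4$ and $\diam(D(A_n))\le 4$ for all $n$, with the bound tight for some $n$. My plan is two-pronged: first sharpen the arguments of Theorems~\ref{thm:9} and~\ref{thm:13} to yield the upper bound $4$, and then exhibit a pair of vertices realizing distance exactly $4$ by direct computation.

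The existing proofs of Theorems~\ref{thm:9} and~\ref{thm:13} only give $\diam(D(S_n))\le 8$ and $\diam(D(A_n))\le 10$, because they route every non-isolated vertex through a fixed \emph{hub} $v_\tau$ (a transposition, respectively a $3$-cycle) at distance at most $4$, respectively $5$, and then invoke the triangle inequality. To pull the bound down to $\diam\le 4$, it is enough to show that every non-isolated vertex sits at distance at most $2$ from the hub. Two sources of slack must be removed. First, the Remark~\ref{rem:0} reduction, which replaces an arbitrary $\delta$ by a prime-power-cycle power $\delta^m$, costs one edge; I would avoid it by working directly with general cycle types $[1^t,m_1^{k_1},\ldots,m_r^{k_r}]$, and for each combinatorial regime (existence of a $2$-cycle, of a $3$-cycle, of at least three fixed points, or none of these) constructing a one-edge neighbour $\delta'$ via a single cycle-surgery and verifying divisibility with Lemmas~\ref{lem:1} and~\ref{lem:2}. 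Lemmas~\ref{lem:14} and~\ref{lem:15} already carry this out for $A_n$ in two of the regimes, and analogues using transpositions as the hub should cover the remaining regimes for $S_n$.

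Second, the three-edge bottleneck paths in the existing proofs have to be shortened. The sharpest case is (iii)(2) of Theorem~\ref{thm:9}, namely $\delta=[1^{n-2p},p^{2}]$ with $p$ odd and $2p>n-2$: the author's intermediate $[1^{n-p},p^{1}]$ is itself a $p$-cycle and, by Lemma~\ref{lem:8}, may actually be an isolated vertex, so a fundamentally different bridge is needed. I would try $\delta'=[1^{n-2p-2},2^{1},p^{2}]$ when $n\ge 2p+2$ or, failing that, $\delta'=[1^{n-2p},(2p)^{1}]$; in each case the centralizer sizes can be compared directly via Lemma~\ref{lem:1}, and divisibility from the $v_\tau$ side follows from Lemma~\ref{lem:2}. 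A parallel analysis, keeping careful track of the $\tfrac{1}{2}$ factor from Corollary~\ref{cor:7}, should resolve the bottleneck subcases (ii)(1)--(ii)(5) of Theorem~\ref{thm:13}.

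For tightness, a computer calculation in \textsf{GAP} for small $n$ should exhibit an explicit pair of vertices at distance exactly $4$. The main obstacle lies in the bottleneck case above: when $p$ is close to $n/2$ the centralizer sizes of the non-isolated vertices become rigidly constrained, and the existence of a bridging vertex with a centralizer size sharing large common divisors with both $|C_G(\delta)|$ and $|C_G(\tau)|$ is not at all obvious. It is quite possible that no uniform two-step bridge exists for every such extremal $\delta$ and that a finite family of small $n$ must be treated by direct computation, or else that a cleverer choice of hub (not a transposition or $3$-cycle) is required for those cases.
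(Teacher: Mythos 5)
The statement you are addressing is posed in the paper as a \emph{conjecture}: the authors prove only $\diam(D(S_n))\leq 8$ and $\diam(D(A_n))\leq 10$ and explicitly leave the bound $4$ open, so there is no proof in the paper to compare against, and your text must stand on its own as a proof. It does not. What you give is a research plan whose decisive steps are marked ``should,'' ``would try,'' and ``it is quite possible that no uniform two-step bridge exists \dots or else that a cleverer choice of hub \dots is required.'' That last sentence concedes precisely the point at issue: your strategy (every non-isolated vertex at distance at most $2$ from the hub $v_\tau$, hence any two such vertices at distance at most $4$) is never carried out for the hard regimes, in particular the length-three routes in subcases (ii)(1) and (ii)(2) of Theorem~\ref{thm:13} and the avoidance of the one-edge loss from Remark~\ref{rem:0} for arbitrary cycle types. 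The tightness half of the conjecture (that $4$ is actually attained, so that it is the \emph{best} upper bound) is likewise deferred to an unspecified \textsf{GAP} computation; no pair of vertices at distance $4$ is exhibited.

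There are also concrete errors in the parts you do make precise. In the bottleneck case $\delta=[1^{n-2p},p^{2}]$ with $p$ odd and $2p>n-2$, the hypothesis forces $n-2p\in\{0,1\}$, so your first proposed bridge $\delta'=[1^{n-2p-2},2^{1},p^{2}]$, which needs $n\geq 2p+2$, never exists in this case; only your fallback $[1^{n-2p},(2p)^{1}]$ is available, and you do not verify the divisibilities for it. Moreover, your criticism of the paper's intermediate vertex $[1^{n-p},p^{1}]$ in case (iii)(2) of Theorem~\ref{thm:9} is mistaken: there $2p\leq n$, so $p\leq n/2<n-1$, Lemma~\ref{lem:8} does not apply, and that vertex is not isolated --- the paper's length-three path is correct, and the only genuine issue is shortening $3$ to $2$, which you have not done. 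As it stands, the conjecture remains unproven by your proposal.
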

\section*{Acknowledgment}
The authors would like to thank the anonymous referee for
helpful comments which improved the quality of this paper.

\bibliographystyle{amsplain}

\end{document}